\gdef\n@te#1#2{\leavevmode\vadjust{%
 {\setbox\z@\hbox to\z@{\strut#1}%
  \setbox\z@\hbox{\raise\dp\strutbox\box\z@}\ht\z@=\z@\dp\z@=\z@%
  #2\box\z@}}}
\gdef\leftnote#1{\n@te{\hss#1\quad}{}}
\gdef\rightnote#1{\n@te{\quad\kern-\leftskip#1\hss}{\moveright\hsize}}
\gdef\?{\FN@\qumark}
\gdef\qumark{\ifx\next"\DN@"##1"{\leftnote{\rm##1}}\else
 \DN@{\leftnote{\rm??}}\fi{\rm??}\next@}}
\DeclareFontFamily{OT1}{wncyr}{\hyphenchar\font45
}
\DeclareFontShape{OT1}{wncyr}{m}{n}{%
   <5> <6> <7> <8> <9> gen * wncyr
   <10> <10.95> <12> <14.4> <17.28> <20.74>  <24.88>wncyr10}{}
\DeclareFontShape{OT1}{wncyr}{m}{it}{%
   <5> <6> <7> <8> <9> gen * wncyi
   <10> <10.95> <12> <14.4> <17.28> <20.74> <24.88> wncyi10}{}
\DeclareFontShape{OT1}{wncyr}{m}{sc}{%
   <5> <6> <7> <8> <9> <10> <10.95> <12> <14.4>
   <17.28> <20.74> <24.88>wncysc10}{}
\DeclareFontShape{OT1}{wncyr}{b}{n}{%
   <5> <6> <7> <8> <9> gen * wncyb
   <10> <10.95> <12> <14.4> <17.28> <20.74> <24.88>wncyb10}{}
\theoremstyle{plain}
\newtheorem{theorem}{Theorem}[subsection]
\newtheorem{proposition}[theorem]{Proposition}
\newtheorem{lemma}[theorem]{Lemma}
\newtheorem{corollary}[theorem]{Corollary}
\theoremstyle{definition}
\newtheorem{definition}[theorem]{Definition}
\newtheorem{nothing*}[theorem]{}
\newtheorem{subnothing*}[sub]{}
\theoremstyle{remark}
\newtheorem{remark}[theorem]{\bf Remark}
\newcommand{\cc}{\raise .4pt \hbox{{$\scriptstyle{\bullet}$}}}
\begin{document}

\title[Faithful actions]{Faithful actions\\ of automorphism groups\\ of free groups\\
on algebraic varieties
}
\author[Vladimir L. Popov]{Vladimir L. Popov}
\address{Steklov Mathematical Institute,
Russian Academy of Sciences, Gub\-kina 8,
Moscow 119991, Russia}
\email{popovvl@mi-ras.ru}

\dedicatory{To the memory of J. E. Humphreys}


\begin{abstract}
\hskip -1mm
Considering a certain construction
of algebraic va\-rie\-ties $X$
en\-dowed with an algebraic action of the group
${\rm Aut}(F_n)$, $n<\infty$, we
obtain a criterion for the faithfulness of this action.
It gives an infinite family $\mathscr F$ of $X$'s such
that ${\rm Aut}(F_n)$ embeds into
${\rm Aut}(X)$. For $n\geqslant 3$, this implies nonlinearity, and for $n\geqslant 2$, the existence of $F_2$ in
${\rm Aut}(X)$ (hence nonamenability of the latter) for $X\in \mathscr F$.
We find in ${\mathscr F}$ two infinite subfamilies ${\mathscr N}$ and $\mathscr R$ con\-sist\-ing of irreducible affine varieties such
that every $X\in {\mathscr N}$ is
non\-rational (and even not stably rational),
while every $X\in \mathscr R$ is rational and $3n$-dimensional.
As an application, we show
that the minimal dimension of
affine algebraic varieties
$Z$, for which ${\rm Aut}(Z)$ contains the
braid group $B_n$ on $n\!\geqslant \!3$ strands,
does not exceed\;$3n$.
This upper bound strengthens the one
following from the paper by D. Krammer \cite{Kr02},
where the linearity of $B_n$ was proved (this latter bound is quadratic in $n$).
The same upper bound
also holds for ${\rm Aut}(F_n)$. In particular, it
shows that the minimal rank of the Cre\-mona groups containing
${\rm Aut}(F_n)$, does not exceed\;$3n$,
and the same is true for $B_n$ if $n\geqslant 3$.
 \end{abstract}

\maketitle

\renewcommand\thesubsection{\arabic{subsection}}
\subsection{Introduction}

 The exploration of abstract-algebraic, topological, algebro-geometric and dynamical properties of
 biregular auto\-morphism groups and
 birational self-map groups
 of algebraic va\-rie\-ties has become the trend of the last decade. In terms of popularity, the Cremona groups are probably the leaders among the studied groups.

Below, algebraic varieties and algebraic groups are understood in the same sense as in \cite{Se55}, \cite{Sh07}, \cite{Bo91},
\cite{Hu75} and are taken over  an
algebraically closed field $k$.

The subject of this paper are the following questions on the group embeddability related to automorphism groups of algebraic varieties.

\vskip 1.5mm

(Q1) For a given group $S$, is there an algebraic variety $Z$ such that $S$ embeds in the group ${\rm Aut}(Z)$ of its biregular automorphisms?

\vskip 1mm

(Q2) If yes, what are the properties of such $Z$? Are there such $Z$ in some distinguished classes of varieties (e.g., rational, nonrational, affine, complete, etc.)? What are the ``extreme'' values of the para\-me\-ters of such $Z$ (e.g., the minimum of their dimensions)? Etc.

\vskip 1mm

(Q3)  Conversely, in which groups can automorphism groups of algeb\-raic varieties of some type be embedded (e.g., are these groups linear?)

\vskip 1.5mm

Similar questions are also formulated in the context of birational self-map groups of algebraic varieties.

It is clear that question (Q1) (but not (Q2)) stands only for ``large'' groups $S$, in particular, nonlinear ones.
Generally speaking, the answer to it is no.\footnote{E.g., in view of
\cite[Thm.\,C]{CX18}, even in the context of birational self-map groups, the answer is negative if $S$ is an infinite simple torsion group with Kazhdan's property (T)
(such a group exists, see\,\cite[Sect.\;5]{Ki94}).}\label{foo}
Finding for a given $S$ the varieties $Z$ such that the answer is yes serves not only as a source of information  about
${\rm Aut}(Z)$, but also as the method of obtaining essential information about the structure of $S$ (see \cite{BL83}, \cite{Ma81}, \cite{CX18}).

In this paper, we explore the case $S={\rm Aut}(F_n)$,
where $F_n$ is a free group of rank $n<\infty$.
To this end, we consider a general construction
that assigns to any finitely generated group $\Sigma$
a family of algebraic varieties $Z$
endowed with an action of
${\rm Aut}(\Sigma)$ by biregular automor\-phisms.
Our results concern each of questions (Q1), (Q2), (Q3).
The main for us
is question (Q1), i.e., that
of faithfulness of
the action of ${\rm Aut}(F_n)$ on $Z$ which
means that the homomorphism
${\rm Aut}(F_n)\to {\rm Aut}(Z)$
defining the action is an embedding.

Here is the construction.
Let $\Sigma$ and $G$ be the groups, and  let
\begin{equation}\label{X}
X:={\rm Hom}(\Sigma, G).
\end{equation}
For any $\sigma\in {\rm End}(\Sigma)$,
$\gamma\in {\rm End}(G)$, put
\begin{equation}\label{definii}
  \sigma_X\colon X\to X,\;\; x\mapsto x\circ \sigma,\hskip 12mm
\gamma_X\colon X\to X,\;\; x\mapsto\gamma\circ x.
\end{equation}
If $\sigma\in {\rm Aut}(\Sigma)$ and $\gamma\in {\rm Aut}(G)$, then
$\sigma_X$ and $\gamma_X$ are invertible (their inverses $\sigma^{-1}_X$ and $\gamma_X^{-1}$ are respectively $
(\sigma^{-1})_X$ and
$(\gamma^{-1})_X$),\;and the mapping
 \begin{equation}\label{action}
 \big({\rm Aut}(\Sigma)\times {\rm Aut}(G)\big)\times X
 \to X,
 \quad (\sigma\gamma, x)\mapsto (\sigma_X^{-1}\circ\gamma_X)(x)
 \end{equation}
is an action on
$X$ of the group ${\rm Aut}(\Sigma)\times {\rm Aut}(G)$ (whose factors are naturally identified with its subgroups).

Below, when considering the action on $X$ of a subgroup of this group, the restriction of action \eqref{action} on it is always meant. The actions of
${\rm Aut}(\Sigma)$ and ${\rm Aut}(G)$ commute with each other.

If $\Sigma$ is a finitely generated group, and $G$ is an algebraic group, then $X$ is endowed with the structure of an algebraic variety so that all
$\sigma_X$ and $\gamma_X$ lie in ${\rm Aut}(X)$.
Let $R$ be an algebraic subgroup of ${\rm Aut}(G)$, for
whose action on $X$ there is a categorical quotient
\begin{equation}\label{Lunan}
\pi_{{X}/\!\!/R}^{\ }\colon {X}\to {X}/\!\!/R
\end{equation}
in the sense of geometric invariant theory (see \cite[Def. 05]{MF82}, \cite[Def.\,4.5]{PV94}).
The following two cases are the main  examples when
this quotient exists (see Proposition \ref{existe} below):
\begin{enumerate}[\hskip 4.2mm\rm (A)]
\item[\rm (F)] $R$ is finite;
\item[\rm (R)]  $G$ is affine and $R$ is reductive.
\end{enumerate}
Since the actions of ${\rm Aut}(\Sigma)$ and $R$ on $X$ commute,
it follows from the definition of categorical quotient that for every
$\sigma\in {\rm Aut}(\Sigma)$, the authomorphism
$\sigma_X$ of the variety $X$ descends to a uniquely defined automorphism
$\sigma_{X/\!\!/R}$ of the variety $X/\!\!/R$ having the property
\begin{equation}\label{pssp1}
\pi_{{X}/\!\!/R}^{\ }\circ\sigma_X=\sigma_{X/\!\!/R}^{\ }\circ\pi_{{X}/\!\!/R}^{\ }.
\end{equation}
The map
\begin{equation}\label{action//R}
{\rm Aut}(\Sigma)\to {\rm Aut}({X/\!\!/R}),\quad \sigma\mapsto\sigma_{X/\!\!/R}^{-1}
\end{equation}
is a group homomorphism.
It determines an action of ${\rm Aut}(\Sigma)$ on $X/\!\!/R$ by biregular automorphisms. In view of \eqref{pssp1}, the morphism $\pi_{{X}/\!\!/R}$ is
${\rm Aut}(\Sigma)$-equivariant.

In the present paper, for $\Sigma =F_n$, we consider the problem  of classify\-ing pairs $(G, R)$ such that the action of ${\rm Aut}(\Sigma)$ on $X/\!\! /R$ is
{\it faithful}. Our main results concern case (F).\footnote{In \cite{Po23}, we consider the situation of case (R) where $G$ is connected and se\-mi\-simple and $R$ is the image in ${\rm Int}(G)$ of a closed subgroup of a maximal torus of $G$. We prove the faithfulness of the action of ${\rm Aut}(F_n)$ on $X/\!\!/R$ in this case.} This problem is related to question (Q1). We apply our results
to questions (Q2), (Q3) as well.
These re\-sults consist of the following.

 The first is the faithfulness criterion for the action of ${\rm Aut}(F_n)$ on ${X}/\!\!/R$
  in case (F).

\begin{theorem}\label{fg1}
Let $G$ be an algebraic group
\textup(not necessarily connected or affine\textup), $X={\rm Hom}(F_n, G)$, $n\geqslant 2$, and let $R$ be a finite subgroup of
 ${\rm Aut}(G)$.
The following properties are equivalent:
\begin{enumerate}[\hskip 3.7mm \rm(a)]
\item
the action of ${\rm Aut}(F_n)$ on $X/\!\!/R$ is faithful;
\item the connected component of the identity of the group $G$ is non\-sol\-v\-able.
 \end{enumerate}
\end{theorem}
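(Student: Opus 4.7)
The two directions require different arguments.

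For (b) $\Rightarrow$ (a), assume $G^0$ is non-solvable. First I observe that $(G^0)^n$ is an irreducible connected component of $X=G^n$, stable under $R$ (since $G^0$ is characteristic in $G$), and that $G^0$ contains a copy of $F_n$. The latter follows from Chevalley's structure theorem (reducing to the affine case: the quotient of $G^0$ by its maximal connected affine normal subgroup is an abelian variety, forcing that normal subgroup to be non-solvable and affine) together with the Tits alternative applied to this non-solvable connected linear group. Now suppose $\sigma\in{\rm Aut}(F_n)$ acts trivially on $X/\!\!/R$. Because $R$ is finite, the fibres of $\pi_{X/\!\!/R}$ are exactly the $R$-orbits, so $\sigma_X^{-1}(x)\in R\cdot x$ for every $x\in X$. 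The closed subvarieties
\[
X_r:=\{x\in X\mid \sigma_X^{-1}(x)=r_X(x)\},\qquad r\in R,
\]
cover $X$, and the irreducibility of $(G^0)^n$ forces a single $r_0\in R$ with $\sigma_X^{-1}=(r_0)_X$ throughout $(G^0)^n$. Writing $\sigma^{-1}(x_i)=w_i\in F_n$, this unpacks to the system
\[
w_i(g_1,\ldots,g_n)=r_0(g_i)\quad\text{for all }(g_1,\ldots,g_n)\in(G^0)^n,\ i=1,\ldots,n.
\]

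Substituting free generators $y_1,\ldots,y_n$ of $F_n\subseteq G^0$ converts such group-valued identities into equations in the free group $F_n$ itself. Fixing $g_i$ and varying the remaining coordinates shows that $w_i$ lies in the subgroup $\langle x_i\rangle\subseteq F_n$, so $w_i=x_i^{k_i}$ and $r_0(g)=g^{k_i}$ on $G^0$; comparing across $i$ gives one common exponent $k$. Because $r_0$ is a group automorphism, the identity $(gh)^k=g^kh^k$ must hold throughout $G^0$, and hence in any non-abelian free subgroup $F_2\subseteq G^0$; but inside $F_2$ this identity forces $k\in\{0,1\}$, and $k=0$ is ruled out because $r_0$ is an automorphism of $G$. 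Thus $w_i=x_i$ for every $i$, i.e.\ $\sigma={\rm id}$.

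For (a) $\Rightarrow$ (b) I argue contrapositively: assume $G^0$ is solvable with derived length $d$ and put $m=[G:G^0]$. Let $K$ be the intersection of all normal subgroups of $F_n$ of index at most $m$. Then $K$ is a characteristic subgroup of finite index in $F_n$, hence free of rank $\geqslant 2$ by Nielsen--Schreier (as $n\geqslant 2$); in particular $K^{(d)}\ne\{1\}$. For every $\phi\in X$ the normal subgroup $\phi^{-1}(G^0)$ has index at most $m$, so contains $K$, and therefore $\phi(K^{(d)})\subseteq(G^0)^{(d)}=\{e\}$. Picking any $h\in K^{(d)}\setminus\{1\}$, the inner automorphism $\sigma_h$ of $F_n$ is nontrivial (as $F_n$ has trivial centre), yet for every $\phi\in X$ and every $g\in F_n$,
\[
(\phi\circ\sigma_h)(g)=\phi(h)\phi(g)\phi(h)^{-1}=\phi(g),
\]
so $\sigma_{h,X}={\rm id}_X$ and a fortiori $\sigma_{h,X/\!\!/R}={\rm id}$, contradicting faithfulness.

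The main technical point lies in the first direction, specifically in the step $w_i(g_1,\ldots,g_n)=r_0(g_i)\Rightarrow w_i=x_i$: it combines a concrete embedding $F_n\hookrightarrow G^0$ (supplied by Chevalley and Tits) with the short word-theoretic analysis ruling out $k\ne 1$ in the identity $(gh)^k=g^kh^k$ on $F_2$.
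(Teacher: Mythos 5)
Your (a)$\Rightarrow$(b) direction is correct and is in substance the paper's own argument (Corollary \ref{solv} plus Theorem \ref{criterion}), presented self-containedly: you produce a nontrivial $h\in F_n$ with $\phi(h)=e$ for every $\phi\in{\rm Hom}(F_n,G)$ and take ${\rm int}_h$. The genuine gap is in (b)$\Rightarrow$(a): everything there rests on the claim that $G^0$ contains a nonabelian free subgroup, which you extract from Chevalley's theorem together with the Tits alternative. But the theorem is stated over an arbitrary algebraically closed field $k$, and in positive characteristic the Tits alternative applies only to \emph{finitely generated} linear groups. Over $k=\overline{\mathbb F}_p$ the group ${\rm GL}_d(k)$ is locally finite, so every element has finite order and a nonsolvable group such as ${\rm SL}_d$ contains no free subgroup at all, not even $F_2$; the paper points out precisely this obstruction in the remark following Theorem \ref{noquon}. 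Consequently the substitution of free generators, the deduction $w_i\in\langle x_i\rangle$ (which in any case needs a more careful separation argument than ``fixing $g_i$ and varying the rest''), and the exclusion of $k\neq 1$ via the law $(gh)^k=g^kh^k$ inside $F_2$ all collapse in that case. As written, your proof covers only the fields where free subgroups are guaranteed, e.g. ${\rm char}(k)=0$ or $k$ uncountable (by \cite{BGGT12}), which is exactly the restricted setting the paper's first version used and then abandoned.

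The paper's route avoids free subgroups altogether. For trivial $R$ it reduces, via Chevalley's theorem and passage to $G^0_{\rm aff}/{\rm Rad}(G^0_{\rm aff})$, to a nontrivial connected semisimple group and invokes Borel's theorem \cite{Bo83} that every nontrivial word map on such a group is dominant, so no identity $w(x)=e$ can hold. For general finite $R$ it runs your component argument (Lemma \ref{lemma} gives a single $\gamma\in R$ on the irreducible component $(G^0)^n$), but then specializes $g_1=\cdots=g_n=g$ to get the identity $g^{d}=\gamma(g)$ on $G^0$, rules out $d=1$ by the trivial-$R$ case and $d=-1$ by commutativity, iterates to $g^{d^m-1}=e$ with $m$ the order of $\gamma$, and contradicts this bounded-exponent identity with the unbounded torsion of a positive-dimensional torus in $G^0_{\rm aff}$. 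To repair your argument you would need to replace the free-subgroup step by an argument of this kind (or by Borel's dominance theorem) that works over every algebraically closed field.
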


   Corollary \ref{cor0n}
    describes the applications of
    Theorem \ref{fg1}
    to questions (Q1)--(Q3), namely,
    to the problem of linearity of automorphism groups of algebraic varieties (considered in \cite[Prop. 5.1]{CD13}, \cite{Co17}, \cite{Ca12}), and to the problem of describing subgroups of the Cremona groups.

\begin{corollary}\label{cor0n} In the notation of Theorem
{\rm \ref{fg1}}, let the connected com\-po\-nent of the identity of the group $G$ be nonsolvable. Then
\begin{enumerate}[\hskip 4.2mm \rm(a)]
\item\label{aaaa}
${\rm Aut}(X/\!\!/R)$ contains the following groups:
\begin{enumerate}[\hskip 0mm $\cc$]
\item
${\rm Aut}(F_n)$,
\item
$F_2$,
 \item
 the braid group $B_n$ on $n$ strands if
 $n\geqslant 3$;
 \end{enumerate}
 \item\label{bbbb}
 ${\rm Aut}(X/\!\!/R)$ is nonamenable and, if $n\geqslant 3$, nonlinear.
 \end{enumerate}
\end{corollary}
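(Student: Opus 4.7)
The plan is to derive everything from the embedding ${\rm Aut}(F_n) \hookrightarrow {\rm Aut}(X/\!\!/R)$ supplied by Theorem \ref{fg1}, reducing the whole corollary to three classical facts about the ambient group ${\rm Aut}(F_n)$. Since the hard analytic content sits inside Theorem \ref{fg1}, what remains should be a short chain of citations and elementary closure properties.

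For part (a), the containment of ${\rm Aut}(F_n)$ itself is exactly the conclusion of Theorem \ref{fg1}. For the $F_2$-containment, I would use that $F_n$ has trivial center for $n\geqslant 2$, so the natural map $F_n\to{\rm Inn}(F_n)\subset{\rm Aut}(F_n)$ is injective; composing the obvious inclusion $F_2\hookrightarrow F_n$ with this embedding and then with the one from Theorem \ref{fg1} yields $F_2\hookrightarrow{\rm Aut}(X/\!\!/R)$. For the braid group, I would invoke Artin's classical faithful representation $B_n\hookrightarrow{\rm Aut}(F_n)$ arising from the action of $B_n$ on the fundamental group of the $n$-punctured disk (explicitly, $\sigma_i$ sends $x_i\mapsto x_ix_{i+1}x_i^{-1}$ and $x_{i+1}\mapsto x_i$, fixing the remaining generators). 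Composition with Theorem \ref{fg1} then gives $B_n\hookrightarrow{\rm Aut}(X/\!\!/R)$ for all $n\geqslant 3$ (and indeed for $n\geqslant 2$, though the $n=2$ case is uninteresting since $B_2\cong\mathbb{Z}$ is already visible inside $F_2$).

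Part (b) follows by two standard closure arguments applied to the subgroups just produced. Nonamenability is inherited upward from any subgroup, and the $F_2$ exhibited in part (a) is the paradigmatic nonamenable group, so ${\rm Aut}(X/\!\!/R)$ is nonamenable. For nonlinearity when $n\geqslant 3$, I would cite the theorem of Formanek and Procesi that ${\rm Aut}(F_n)$ is itself nonlinear for $n\geqslant 3$; since linearity is inherited by subgroups, a nonlinear subgroup obstructs linearity of the ambient group, so ${\rm Aut}(X/\!\!/R)$ is nonlinear as well.

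I do not anticipate any genuine obstacle: once Theorem \ref{fg1} is assumed, the argument is a bookkeeping assembly of two known embeddings (inner automorphisms, Artin) and two hereditary negative properties (nonamenability from $F_2$, nonlinearity from Formanek--Procesi). The only point requiring care is a correct attribution for the Formanek--Procesi nonlinearity theorem in the nonlinearity step; everything else is immediate.
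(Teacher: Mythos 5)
Your proposal is correct and follows essentially the same route as the paper, which likewise deduces everything from Theorem \ref{fg1} together with the facts that ${\rm Int}(F_n)\cong F_n\supseteq F_2$ (trivial center), that $B_n$ embeds into ${\rm Aut}(F_n)$ by Artin's representation, that containing $F_2$ rules out amenability, and that nonlinearity follows from Formanek--Procesi since linearity passes to subgroups. The paper merely packages these four observations as a separate Proposition (\ref{linea}) before citing it, so there is no substantive difference.
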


Among the varieties $X/\!\!/R$
from Corollary \ref{cor0n}  whose automorphism group contains
${\rm Aut}(F_n)$, $F_2$ and $B_n$, there are both rational and nonra\-tional (and even not stably rational), namely:

\begin{proposition}\label{prpr}
Let, in the notation of Theorem {\rm \ref{fg1}}, the group $G$ be connected and the group $R$ be trivial. Then the variety $X/\!\!/R$ is rational if $G$ is affine and nonunirational,
if $G$ is nonaffine.
\end{proposition}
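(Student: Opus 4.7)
The plan is to unwind the definitions and then handle the two cases separately by classical structural facts. Since $R$ is trivial we have $X/\!\!/R=X$. Evaluating a homomorphism on a fixed free basis $x_1,\ldots,x_n$ of $F_n$ gives an isomorphism of algebraic varieties
\begin{equation*}
X={\rm Hom}(F_n,G)\;\xrightarrow{\ \sim\ }\;G^n,\qquad \varphi\mapsto (\varphi(x_1),\ldots,\varphi(x_n)),
\end{equation*}
because $F_n$ has no defining relations, so the functor of points of $X$ is the $n$-fold self product of that of $G$. Thus the proposition reduces to: $G^n$ is rational if $G$ is connected affine, and $G^n$ is nonunirational if $G$ is connected nonaffine.

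For the affine case I would invoke the classical theorem that every connected affine algebraic group over an algebraically closed field is a rational variety (this follows, e.g., from the Bruhat decomposition, which exhibits a dense open subset isomorphic to an affine space; for solvable $G$ it follows from the structure as an iterated extension of $\mathbb{G}_a$'s and $\mathbb{G}_m$'s). A product of rational varieties is rational, so $G^n$ is rational.

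For the nonaffine case I would apply Chevalley's structure theorem to obtain an exact sequence
\begin{equation*}
1\longrightarrow H\longrightarrow G\stackrel{p}{\longrightarrow} A\longrightarrow 1
\end{equation*}
with $H$ connected affine and $A$ an abelian variety; since $G$ is nonaffine, $\dim A\geqslant 1$. Composing the projection onto the first factor $G^n\to G$ with $p$ yields a surjective morphism $f\colon G^n\twoheadrightarrow A$ onto a positive-dimensional abelian variety. If $G^n$ were unirational, there would exist a dominant rational map $\mathbb{P}^N\dashrightarrow G^n$, and the composition $\mathbb{P}^N\dashrightarrow A$ would be a nonconstant rational map. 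But every rational map from $\mathbb{P}^N$ to an abelian variety extends to a morphism (abelian varieties contain no rational curves) and is necessarily constant, a contradiction.

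The only potentially subtle point is the identification $X\cong G^n$ as algebraic varieties, which must be checked against the scheme structure the paper places on ${\rm Hom}(F_n,G)$; this is a direct consequence of the universal property of the free group and should either be recalled from an earlier section or verified in a short lemma. Everything else is a straightforward appeal to Chevalley's theorem and the standard rigidity of maps from rational varieties to abelian varieties.
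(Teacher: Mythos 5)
Your proof is correct and follows essentially the same route as the paper: identify $X/\!\!/R=X$ with $G^n$, use rationality of connected affine groups in the affine case, and in the nonaffine case project via Chevalley's theorem onto a nontrivial abelian variety and conclude from the nonunirationality of abelian varieties. The only cosmetic difference is that you prove the latter fact directly (rational maps from $\mathbb{P}^N$ to an abelian variety are constant), while the paper simply cites it.
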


In general case, the rationality of the variety $X/\!\!/R$ for a connected reductive $G$ and $R={\rm Int}(G)$ is the old
problem open even for $n=2$ and
$G={\rm GL}_d$ with $d\geqslant 5$ (see \cite[(1.5.2)]{Po94}, \cite[pp.\;190--191]{DF04}).

In view of Proposition \ref{prpr}, if $G$ is nonaffine, then the variety $X/\!\!/R$ with trivial $R$ is not stably rational. For nontrivial finite $R$, the variety $X/\!\!/R$ with the faithful action of ${\rm Aut}(F_n)$ may be not stably rational even if $G$
is affine. Our second main result is Theorem \ref{stratio1} giving the construction of such affine $X/\!\!/R$ with a connected reductive $G$.
Its proof also uses Theorem\;\ref{fg1}.

\begin{theorem}\label{stratio1}
 For every prime number $p\neq {\rm char}(k)$, there is a finite $p$-group $K$,
having the following property. Let
$V$ be a finite-dimensional vector space over $k$, and let
$\iota\colon\! K\!\hookrightarrow\! {\rm GL}(V)$ be a group embedding for which
$\iota(K)$ has no nontrivial center elements of the group ${\rm GL}(V)$ \textup(such pairs $(V,\iota)$ exist for any finite group $K$\textup).
Let $X={\rm Hom}(F_n, {\rm GL}(V))$, $n\geqslant 2$, and let
$R$ be the image of the group $\iota(K)$ under the canonical homomorphism
${\rm GL}(V)\to {\rm Int}\big({\rm GL}(V)\big)$.
Then $X/\!\!/R$ is  nonrational \textup (and even not stably
 rational\textup) affine algebraic variety,
 on which the group ${\rm Aut}(F_n)$ acts faithfully.
\end{theorem}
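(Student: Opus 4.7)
The plan is to unwind the construction, reduce the non-stable-rationality assertion to Noether's problem via a linearization, and then invoke the Saltman--Bogomolov theorem on invariant fields of $p$-groups. First, identify $X=\mathrm{Hom}(F_n,\mathrm{GL}(V))$ with $\mathrm{GL}(V)^n$ by evaluation at a free basis of $F_n$; then each $\mathrm{Int}(k)\in R$, $k\in\iota(K)$, acts on $X$ by simultaneous conjugation $(g_1,\ldots,g_n)\mapsto(kg_1k^{-1},\ldots,kg_nk^{-1})$. The hypothesis $\iota(K)\cap Z(\mathrm{GL}(V))=\{1\}$ makes $\iota(K)\to\mathrm{Int}(\mathrm{GL}(V))$ injective, so $R\cong K$; such pairs $(V,\iota)$ exist for any finite $K$ because in the regular representation $V=k[K]$ the only $k\in K$ acting as a scalar is the identity. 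Since $\mathrm{GL}(V)$ is affine and $R$ is finite, case~(F) of the construction produces $X/\!\!/R=\mathrm{Spec}\,k[\mathrm{GL}(V)^n]^K$ as an affine variety, and for $\dim V\geqslant 2$ (which one may always arrange, e.g.\ by taking a further regular-representation summand if necessary) Theorem~\ref{fg1} immediately gives the faithfulness of the induced $\mathrm{Aut}(F_n)$-action on $X/\!\!/R$.

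The substantive content is the failure of stable rationality. The plan here is to linearize: $\mathrm{GL}(V)^n$ is a $K$-stable Zariski open subset of $\mathrm{End}(V)^n\cong(V\otimes V^{\ast})^{\oplus n}$, hence
\[
k(X/\!\!/R)=k(\mathrm{GL}(V)^n)^K=k(\mathrm{End}(V)^n)^K.
\]
The linear action of $K$ on $\mathrm{End}(V)$ has kernel $\iota(K)\cap Z(\mathrm{GL}(V))=\{1\}$ by hypothesis, so $\mathrm{End}(V)^n$ is a \emph{faithful} linear $K$-representation. By the no-name lemma (Bogomolov--Katsylo), the stable birational class of $k(W)^K$ for a faithful linear $K$-representation $W$ is an invariant of the abstract group $K$ alone. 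Hence the plan is to take $K$ to be a finite $p$-group with nontrivial \emph{Bogomolov multiplier}, equivalently $H^{2}_{\mathrm{nr}}(k(W)^K/k,\mathbb{G}_m)\neq 0$. For such $K$, $k(W)^K$ is not stably rational over $k$, and the same therefore holds for $k(X/\!\!/R)$, whence $X/\!\!/R$ is not stably rational.

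The main external input --- and the principal obstacle of the argument --- is the existence of such $p$-groups. This is the theorem of Saltman--Bogomolov, producing, for every prime $p\neq\mathrm{char}(k)$, a finite $p$-group whose unramified Brauer group on any faithful linear invariant field is nonzero; I would cite it directly. Granting this, the remainder is routine: the linearization step is justified by the density of $\mathrm{GL}(V)^n$ in $\mathrm{End}(V)^n$; the faithfulness of the $K$-action on $\mathrm{End}(V)$ by conjugation reduces to the ``no nontrivial scalar'' hypothesis; and the affineness together with the $\mathrm{Aut}(F_n)$-faithfulness are supplied by case~(F) of the construction and by Theorem~\ref{fg1} respectively.
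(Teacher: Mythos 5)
Your proposal is correct and follows essentially the same route as the paper: linearize via the $K$-stable open inclusion $\mathrm{GL}(V)^n\subset\mathrm{End}(V)^{\oplus n}$, use the centerless hypothesis to get faithfulness of the conjugation representation, invoke the no-name-lemma invariance of stable rationality of $k(W)^K$ over faithful $W$ together with Saltman's $p$-groups, and get affineness and $\mathrm{Aut}(F_n)$-faithfulness from case (F) and Theorem \ref{fg1}. (Only note that $\dim V\geqslant 2$ is automatic from the hypothesis, since a nontrivial $K$ embedded in $\mathrm{GL}_1$ would consist of central elements, so no adjustment of $V$ is needed or permitted.)
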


Examples of groups $K$ from Theorem \ref{stratio1} can be explicitly specified using generators and relations (see Remark \ref{stra} below).

Our third main result concerns question (Q2) and, in particular, gives upper bounds for ``extremal'' parameter values for embeddings of ${\rm Aut}(F_n)$ and $B_n$ into automorphism groups of algebraic varieties.

\begin{theorem}\label{Cr1nn} Keep the notation of Theorem {\rm \ref{fg1}}.\;Let
  $n\geqslant 2$, $G={\rm SL}_2$ or ${\rm PSL}_2$, and let $R$ be finite. Then $X/\!\!/R$ is an irreducible rational affine $3n$-dimensional algebraic variety, whose automorphism group
contains ${\rm Aut}(F_n)$.
\end{theorem}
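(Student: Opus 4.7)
The plan breaks into three steps. First, the claims about $X/\!\!/R$ other than rationality are essentially immediate. Evaluation on a fixed free basis of $F_n$ identifies $X = \mathrm{Hom}(F_n, G)$ with $G^n$; since $\dim G = 3$ and $G$ is connected, $X$ is an irreducible affine variety of dimension $3n$. Because $R$ is finite, the categorical quotient $X/\!\!/R$ coincides with the geometric quotient $X/R$, and is therefore affine, irreducible, and of dimension $3n$. Theorem~\ref{fg1} applies and yields the embedding $\mathrm{Aut}(F_n) \hookrightarrow \mathrm{Aut}(X/\!\!/R)$, since the identity component of $G$ is $G$ itself, which is semisimple and hence nonsolvable (both $\mathrm{SL}_2$ and $\mathrm{PSL}_2$ are simple algebraic groups).

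The substantive claim is the rationality of $X/\!\!/R$. Both $\mathrm{SL}_2$ and $\mathrm{PSL}_2$ admit only inner algebraic automorphisms, so $\mathrm{Aut}(G) = \mathrm{PGL}_2$, and $R$ is a finite subgroup of $\mathrm{PGL}_2$ acting on $X = G^n$ by simultaneous conjugation. The Cayley transform $\xi \mapsto (1+\xi)(1-\xi)^{-1}$ (with an appropriate variant in characteristic $2$ and in the $\mathrm{PSL}_2$ case) is a $\mathrm{PGL}_2$-equivariant birational isomorphism $\mathfrak{g} \dashrightarrow G$ intertwining the adjoint representation on $\mathfrak{g} = \mathfrak{sl}_2$ with conjugation on $G$. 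Applying it in each factor yields an $R$-equivariant birational isomorphism $\mathfrak{g}^n \dashrightarrow G^n$, so $X/\!\!/R$ is birational to $\mathfrak{g}^n/R$, where $R$ acts linearly and faithfully on $\mathfrak{g}^n \cong k^{3n}$ through the diagonal adjoint action (faithful because $\mathrm{PGL}_2 \hookrightarrow \mathrm{GL}(\mathfrak{g})$ is an embedding). The no-name lemma of Bogomolov and Katsylo then yields
\[
\mathfrak{g}^n/R = (\mathfrak{g} \oplus \mathfrak{g}^{n-1})/R \;\sim_{\mathrm{bir}}\; (\mathfrak{g}/R) \times \mathfrak{g}^{n-1} \cong (\mathfrak{g}/R) \times \mathbb{A}^{3(n-1)}.
\]

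The remaining step, which I expect to be the main obstacle, is to verify that the threefold $\mathfrak{g}/R$ is rational for every finite $R \subset \mathrm{PGL}_2$. Through the adjoint embedding $\mathrm{PGL}_2 \hookrightarrow \mathrm{SO}(\mathfrak{g})$, $R$ becomes a finite subgroup of $\mathrm{SO}_3$ acting on $\mathfrak{g} \cong k^3$. In good characteristic the classification of such subgroups (cyclic, dihedral, $A_4$, $S_4$, $A_5$) combined with classical descriptions of the invariant rings $k[\mathfrak{g}]^R$ exhibits $\mathfrak{g}/R$ as a rational threefold case by case; a parallel analysis handles bad characteristic. Together with the preceding reduction, this gives $X/\!\!/R$ birational to $\mathbb{A}^3 \times \mathbb{A}^{3(n-1)} = \mathbb{A}^{3n}$, completing the proof.
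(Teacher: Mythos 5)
Your first two steps coincide with the paper's argument: finiteness of $R$ gives the geometric quotient (affine, irreducible, $3n$-dimensional), Theorem \ref{fg1} gives the embedding of ${\rm Aut}(F_n)$, and the rationality question is reduced, exactly as in the paper, via the Cayley property of ${\rm SL}_2$ and ${\rm PSL}_2$ (an $R$-equivariant birational map $X\dashrightarrow\mathfrak g^{n}$, cf.\ \cite[Thm.\,1.28]{LPR06}) and the no-name lemma applied to $\mathfrak g^{n}=\mathfrak g\oplus\mathfrak g^{n-1}$, to the rationality of the three-dimensional quotient $\mathfrak g/\!\!/R$.

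The genuine gap is precisely at that last step, which you yourself flag as ``the main obstacle'' and do not carry out. Your proposed route --- the classification of finite subgroups of ${\rm PGL}_2$ as cyclic, dihedral, $A_4$, $S_4$, $A_5$, followed by a case-by-case computation of invariant rings --- is not adequate as stated: the theorem is asserted over an arbitrary algebraically closed field, and in characteristic $p>0$ the group ${\rm PGL}_2(k)$ contains further finite subgroups (elementary abelian $p$-groups and their extensions by cyclic groups inside a Borel, as well as ${\rm PSL}_2(\mathbb F_{p^m})$ and ${\rm PGL}_2(\mathbb F_{p^m})$), so your list is incomplete and the corresponding modular invariant-theoretic computations are not ``parallel'' to the classical ones; the phrase ``a parallel analysis handles bad characteristic'' hides exactly the content that is missing. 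No case-by-case analysis is needed: the paper closes this step in one line by invoking Miyata's theorem \cite[Thm.\,2]{Mi71}, which gives the rationality of the field of invariants of any linear action of a finite group on a three-dimensional vector space (equivalently, one reduces by the scalar $\mathbb G_m$-action to a quotient of $\mathbb P^2$ by a finite group, a separably unirational surface, hence rational). Replacing your unfinished classification argument by this general statement turns your sketch into the paper's proof; as written, the rationality of $X/\!\!/R$ is not yet established.
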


Note that the variety $X/\!\!/R$ from Theorem \ref{Cr1nn} in the case of trivial $R$ and $G={\rm SL}_2$ (respectively, ${\rm PSL}_2$)
is the product of $n$ copies of the smooth affine quadric $Q$ in $\mathbb A^4$ given by the equation $x_1x_2+x_3x_4=1$
(respectively, $n$ copies of $Q/I$, where $I$ is the group,
generated by the automorphism $(a,b,c,d)\mapsto (-a,-b,-c,-d)$).

\begin{definition}\label{Defff}
For any group $S$, denote by ${\rm Var}_k(S)$ (respectively,
${\rm Crem}_k(S)$) the minimal dimension of (defined over $k$) irreducible algeb\-raic varieties $Z$ (respectively, the ranks of the Cremona groups $C$) such that $S$ embeds into ${\rm Aut}(Z)$ (respectively, into $C$). If there are no such $Z$ (respectively, $C$), then set ${\rm Var}_k(S)=\infty$ (respectively, ${\rm Crem}_k(S)=\infty$).
\end{definition}

Groups $S$ with ${\rm Var}_k(S)\!={\rm Crem}_k(S)\!=\!\infty$ exist (see footnote\,${}^1$).

\begin{corollary}\label{boundsn} Let $S={\rm Aut}(F_n)$ with $n\geqslant 1$ or $B_n$ with $n\geqslant 3$. Then
\begin{equation}\label{esti}
\mbox{${\rm Var}_k(S)\leqslant 3n$\quad and\quad
${\rm Crem}_k(S)\leqslant 3n$.}
\end{equation}
\end{corollary}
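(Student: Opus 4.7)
The plan is to assemble Corollary~\ref{boundsn} directly from Theorem~\ref{Cr1nn} and Corollary~\ref{cor0n}(a), with a trivial side check for the edge case $n=1$ (only relevant to ${\rm Aut}(F_n)$).

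I would first choose $G={\rm SL}_2$ (or ${\rm PSL}_2$) and take $R\subset {\rm Aut}(G)$ to be the trivial subgroup, observing that $G$ is connected and simple, so $G^\circ=G$ is nonsolvable and the hypotheses of both Theorem~\ref{Cr1nn} and Corollary~\ref{cor0n} are in force. For $n\geqslant 2$, Theorem~\ref{Cr1nn} then supplies an irreducible, rational, affine, $3n$-dimensional variety $Y:=X/\!\!/R$ whose automorphism group contains ${\rm Aut}(F_n)$, while Corollary~\ref{cor0n}(a) further puts $B_n$ inside ${\rm Aut}(Y)$ whenever $n\geqslant 3$. Since $Y$ is irreducible of dimension $3n$, Definition~\ref{Defff} immediately yields ${\rm Var}_k(S)\leqslant 3n$ in both cases.

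For the Cremona bound I exploit rationality of $Y$: a birational isomorphism $Y\dashrightarrow \mathbb{P}^{3n}$ induces a group isomorphism ${\rm Bir}(Y)\cong {\rm Bir}(\mathbb{P}^{3n})={\rm Cr}_{3n}(k)$, the Cremona group of rank $3n$. Irreducibility of $Y$ makes the canonical inclusion ${\rm Aut}(Y)\hookrightarrow {\rm Bir}(Y)$ injective (a biregular automorphism trivial on a dense open set is trivial everywhere), so composing the embedding $S\hookrightarrow {\rm Aut}(Y)$ with this inclusion and the isomorphism above yields $S\hookrightarrow {\rm Cr}_{3n}(k)$, whence ${\rm Crem}_k(S)\leqslant 3n$. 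The remaining case $n=1$ with $S={\rm Aut}(F_1)\cong \mathbb{Z}/2\mathbb{Z}$ is handled by hand: any nontrivial involution of $\mathbb{P}^1$, e.g.\ $[x:y]\mapsto [y:x]$, sits inside ${\rm PGL}_2(k)={\rm Aut}(\mathbb{P}^1)={\rm Cr}_1(k)$, producing embeddings witnessing the stronger bounds ${\rm Var}_k(S),{\rm Crem}_k(S)\leqslant 1\leqslant 3$.

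There is no substantive obstacle here, since the content has already been packaged into Theorem~\ref{Cr1nn} and Corollary~\ref{cor0n}; the deduction is essentially a bookkeeping step. The only minor points requiring attention are the injectivity of ${\rm Aut}(Y)\hookrightarrow {\rm Bir}(Y)$ (which is where irreducibility of $Y$ is crucial) and the birational invariance of ${\rm Bir}$ used to identify it with ${\rm Cr}_{3n}(k)$.
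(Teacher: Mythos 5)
Your proposal is correct and follows essentially the same route as the paper, whose proof simply combines Definition~\ref{Defff}, Theorem~\ref{Cr1nn} (with a finite, e.g.\ trivial, $R$) and Corollary~\ref{cor0n}. The extra details you supply --- the identification ${\rm Bir}(Y)\cong{\rm Cr}_{3n}(k)$ via rationality, the injectivity of ${\rm Aut}(Y)\hookrightarrow{\rm Bir}(Y)$, and the separate trivial treatment of $n=1$ (where Theorem~\ref{Cr1nn} does not apply) --- are exactly what the paper leaves implicit.
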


The upper bounds \eqref{esti} for $S=B_n$
strengthen the ones
following from the paper by D. Krammer \cite{Kr02},
where embeddability of $B_n$ into ${\rm GL}_{n(n-1)/2}$
was proved (which yields the upper bound ${n(n-1)/2}$).\footnote{As the referee noted,
a lower bound for ${\rm Var}_k({\rm Aut}(F_n))
$ can be obtained using the methods of \cite{CX18}.}

A special case of the described construction,
where $R={\rm Int}(G)$ with reductive $G$, is explored
in many publications, starting essentially with the paper by Vogt of 1889. The subjects of these studies
 are: (a) ap\-pli\-cations to the theory of deformations of hyperbolic structures on to\-po\-logical surfaces, see \cite{Go09}
 (in this case, $\Sigma$ is the fundamental group of the surface, $G={\rm SL}_2(\mathbb C)$, and $X/\!\!/R$ is the ``variety of characters'' of $\Sigma$);
(b) dynamic properties of the action of ${\rm Aut}(\Sigma)$ on
$X/\!\!/R$, see \cite{Go97}, \cite{Go06}, \cite{Ca131};
(c)  for $\Sigma=F_n$, finding the equations of the ``variety of cha\-rac\-ters'' and describing the kernel of the action of ${\rm Aut}(F_n)$ on it, see \cite{Ho72}, \cite{Ho75}, \cite {Ma80}.

For the purposes of this paper, this special case is of little interest, since the group ${\rm Int}(\Sigma)$ is always contained in the kernel of the action of
${\rm Aut}(\Sigma)$ on $X/\!\!/{\rm Int}(G)$, and therefore,
for $\Sigma=F_n$, the faithfulness of this action is possible only for $n=1$ (when ${\rm Aut}(F_n)$ is a group of order $2$).
For $n=1$ and connected $G$, the rare cases when this action is faithful  are described the following theorem.

\begin{theorem}\label{S=Gn}
Let $G$ be a connected reductive algebraic group, $X\!=\!{\rm Hom}(F_n, G)$ and $R\!=\!{\rm Int}(G)$. The action of ${\rm Aut}(F_n)$ on $X/\!\!/R$ is faithful if and only if
$n=1$ and $G$ contains a connected simple normal subgroup
any of the following types:
\begin{equation}\label{type}
\mbox{${\sf A}_{\ell}$ with $\ell\geqslant 2$,\;
${\sf D}_\ell$ with odd $\ell$, \;
${\sf E}_6$.}
\end{equation}
\end{theorem}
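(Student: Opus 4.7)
The plan is to split on $n$ and reduce to an elementary question about inversion on the Chevalley quotient. For $n\ge 2$, I would show that the kernel of the homomorphism $\mathrm{Aut}(F_n)\to\mathrm{Aut}(X/\!\!/R)$ contains the inner automorphism subgroup $\mathrm{Int}(F_n)$. Indeed, for $g\in F_n$ and $\sigma=c_g\colon h\mapsto ghg^{-1}$, one computes $\sigma_X(x)(h)=x(ghg^{-1})=x(g)\,x(h)\,x(g)^{-1}=(c_{x(g)}\circ x)(h)$, so $\sigma_X(x)$ lies in the $R$-orbit of $x$ because $c_{x(g)}\in R=\mathrm{Int}(G)$; hence $\sigma$ descends to the identity on $X/\!\!/R$. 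Since $F_n$ has trivial center for $n\ge 2$, $\mathrm{Int}(F_n)\simeq F_n\ne 1$ and the action cannot be faithful.

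For $n=1$, $\mathrm{Aut}(F_1)\simeq \mathbb{Z}/2\mathbb{Z}$, with generator $\sigma\colon m\mapsto -m$. Identifying $X=\mathrm{Hom}(\mathbb{Z},G)$ with $G$ via $x\mapsto x(1)$ turns $\sigma_X$ into inversion $\iota\colon g\mapsto g^{-1}$. By the Chevalley--Steinberg theorem, $X/\!\!/R=G/\!\!/\mathrm{Int}(G)$ is naturally isomorphic to $T/W$, where $T$ is a maximal torus of $G$ and $W=N_G(T)/T$ its Weyl group, and $\iota$ descends to $[t]\mapsto [t^{-1}]$ on $T/W$. Consequently, the $\mathrm{Aut}(F_1)$-action on $X/\!\!/R$ is faithful if and only if the automorphism $-\mathrm{id}_T$ of $T$ is \emph{not} induced by any element of $W$.

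It remains to translate this into the root-theoretic condition of the theorem. Decomposing $G$ up to isogeny as a central torus times a product of simple factors $G_1,\ldots,G_r$, one obtains $W=W_1\times\cdots\times W_r$; since $W$ acts trivially on the central torus while $-\mathrm{id}$ is non-trivial on any positive-dimensional torus, $-\mathrm{id}_T\in W$ forces $G$ to be semisimple and $-\mathrm{id}\in W_i$ for each $i$. The classical classification (Bourbaki, \textit{Groupes et alg\`ebres de Lie}, Ch.~VI) lists the simple types for which $-\mathrm{id}$ lies in the Weyl group: ${\sf A}_1$, ${\sf B}_\ell$, ${\sf C}_\ell$, ${\sf D}_\ell$ with even $\ell$, ${\sf E}_7$, ${\sf E}_8$, ${\sf F}_4$, ${\sf G}_2$. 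Hence $-\mathrm{id}_T\notin W$, i.e.\ the action is faithful, exactly when some simple factor of $G$ has type ${\sf A}_\ell$ with $\ell\ge 2$, ${\sf D}_\ell$ with odd $\ell$, or ${\sf E}_6$, which is precisely \eqref{type}. The only non-routine step is the verification that $\iota$ descends through the Chevalley quotient to $[t]\mapsto[t^{-1}]$, which follows at once from the naturality of $G/\!\!/\mathrm{Int}(G)\simeq T/W$; everything else is bookkeeping with the structure theory of connected reductive groups.
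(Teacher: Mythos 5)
You follow the paper's route: for $n\geqslant 2$ you show ${\rm Int}(F_n)$ lies in the kernel (this is exactly the paper's Corollary \ref{intker}), and for $n=1$ you identify the nontrivial element of ${\rm Aut}(F_1)$ with inversion on $G$, reduce faithfulness to whether inversion of a maximal torus $T$ is induced by an element of the Weyl group $W$, and then quote the classification of the simple types with $w_0=-1$. Two steps are stated without justification but are harmless: the passage from ``every $W$-orbit in $T$ is stable under $t\mapsto t^{-1}$'' to ``a single $w\in W$ inverts all of $T$'' needs the remark that $T=\bigcup_{w\in W}\{t\in T\mid w(t)=t^{-1}\}$ is a finite union of closed subgroups of the irreducible group $T$ (the paper is equally brief here), and your appeal to $G/\!\!/{\rm Int}(G)\simeq T/W$ plays the role of the paper's Lemma \ref{categ} combined with Steinberg's description of the closed conjugation orbits, so it is the same input in different packaging.

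The genuine defect is your concluding sentence. You correctly observe that $-{\rm id}_T\in W$ forces $G$ to be semisimple, since $W$ acts trivially on the radical; but the claim that the action is faithful \emph{exactly} when some simple factor has a type from \eqref{type} does not follow from this. By your own criterion, as soon as the radical of $G$ is positive-dimensional one has $-{\rm id}_T\notin W$ and the action is faithful irrespective of the simple factors. Concretely, for $G={\rm GL}_2$ and $n=1$ the quotient has coordinate ring $k[{\rm tr},\det,\det^{-1}]$ and inversion sends $({\rm tr},\det)$ to $({\rm tr}\cdot\det^{-1},\det^{-1})$, a nontrivial automorphism, although ${\rm GL}_2$ has no connected simple normal subgroup of a type in \eqref{type}; the same happens for any torus. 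So what your argument actually establishes is: the action is faithful iff $n=1$ and either the radical of $G$ is nontrivial or some connected simple normal subgroup has a type from \eqref{type}. This is not the equivalence you assert, and it cannot be repaired without either restricting to semisimple $G$ or amending the criterion. (The paper's proof makes the same silent jump, declaring $-1\in W$ equivalent to $-1$ lying in the Weyl group of every simple normal subgroup, which tacitly presumes the radical is trivial; your more careful bookkeeping with the central torus is precisely where this surfaces, and your final sentence then contradicts the one before it.)
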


The proof of Theorem \ref{fg1} is given in Sections \ref{p0} and \ref{p1},
of Corollary \ref{cor0n} in Section \ref{p2}, of Theorem \ref{stratio1} in Section  \ref{p3},
of Theorem \ref{Cr1nn} and Corollary \ref{boundsn} in Section \ref{p4}, and of
Theorem \ref{S=Gn} in Section \ref{p5}.

\vskip 2mm

{\it Acknowledgements.} The author is grateful to N.\;L.\;Gordeev for dis\-cussing the questions about group identities that arose in connection with the proof of Theorem \ref{fg1}, information about some publications on this topic, and comments on the first version \cite{Po21} of this paper.
The author is also grateful to the referee whose comments are highly appreciated.

\subsection{Conventions and notation}\

If $X$ is an algebraic variety (respectively, a differentiable manifold),
then ${\rm Aut}(X)$ denotes the group
its regular automorphisms
(respecti\-vely, diffemorphisms).

Groups are considered in multiplicative notation.
The identity ele\-ment of a group is denoted by $e$ (it is clear from the context which group is meant).

The claim that the group $G$ contains the group $H$,
means the exis\-tence of a group embedding $\iota\colon H\hookrightarrow G$, by which $H$ is identified with $\iota(H)$.

${\mathscr C}(G)$ is the center of the group  $G$.

${\mathscr C}_G(g)$ is the centralizer in $G$ of an element $g\in G$.

${\rm int}_g$ is the inner group automorphism determined by an element  $g$.

$\langle g_1,\ldots, g_m\rangle$ is the group generated by the elements  $g_1,\ldots, g_m$.

$G^0$ is the connected component of the identity of an algebraic group or a real Lie group $G$.

The Lie algebra of an algebraic group is denoted by the lowercase Gothic version of the letter denoting that group.

$\underline{G}$ is the underlying variety (or manifold) of an algebraic group (or real Lie group) $G$.

${\rm Aut}(G)$, ${\rm  Int}(G)$, ${\rm Out}(G)$, and ${\rm End}(G)$ are respectively the group of automorphisms, inner automorphisms, outer automorphisms, and the monoid of endomorphisms of a group $G$. If $G$ is an algebraic group or a real Lie group,
then by its automorphisms we mean automorphisms in the category of algebraic groups or real
Lie groups, so that ${\rm Aut}(G)$ denotes the intersection of ${\rm Aut}({\underline G})$ with the automorphism group of the abstract group $G$.
If an algebraic group $H$ faithfully acts by automor\-phisms of an algebraic group $G$ and the mapping $H\times G\to G$  defining this action
is a morphism of algebraic varieties, then
$H$ is called an algebraic subgroup of ${\rm Aut}(G)$.

The reductivity of an affine algebraic group $G$ does not assume its connectedness and is understood in the sense of \cite{MF82}, i.e., as the triviality of the unipotent radical of the group $G^0$.

\subsection{Fixing a system of generators of \boldmath $\Sigma$
}\label{wds}

Consider a group $G$ and a finitely generated group
$\Sigma$. Let $s_1,\ldots, s_n$ be a system of
generators of $\Sigma$ and let
$\varphi\colon F_n\to \Sigma$ be the epimorphism
defined by the equalities $\varphi(f_j)=s_j$ for every $j$.

For any group $H$
and any $w\in F_n$,  $h=(h_1,\ldots, h_n)\in H^n$, denote by $w(h)=w(h_1,\ldots, h_n)$ the image of $w$ under the (unique) homomor\-phism $F_n\to H$ mapping $f_j$ to $h_j$ for every $j$. In other words, if we write $w$ as a word
\begin{equation}\label{Lora}
f_{i_1}^{\varepsilon_1}\cdots f_{i_d}^{\varepsilon_d} ,\quad\mbox{where $\varepsilon_j\in\mathbb Z$},
\end{equation}
(a noncommutative Laurent monomial in $f_1,\ldots, f_n$), then $w(h)$ is ob\-tained by replacing $f_j$ with $h_j$ in
\eqref{Lora} for each $j$.

The map
\begin{equation}\label{embe}
X:={\rm Hom}(\Sigma, G)\to G^n,\quad x\mapsto \big(x(s_1),\ldots, x(s_n)\big)\in G^n
\end{equation}
is an injection. Its image is the set
\begin{equation}\label{image}
 \{g\in G^n\mid \mbox{$w(g)=e$ for all $w\in {\rm Ker}(\varphi)$}
 \}.
\end{equation}
In the rest of this paper, if necessary and without reminders, we {\it identify $X$ with the set} \eqref{image} using the injection \eqref{embe}.
For $\Sigma=F_n$ and $s_j=f_j$ for all $j$, we have
$X=G^n$, so in this case $X$
is the group (with the componentwise multiplication).

Let $g=(g_1,\ldots, g_n)
\in X\subseteq G^n$ and $t\in \Sigma$. It follows from
\eqref{image} that the element $w(g)\in G$ is the same for all $w\in \varphi^{-1}(t)$. Denote it by $t(g)$. In other words, writing $t$ as a noncommutative Laurent monomial in $s_1,\ldots, s_n$ and replacing $s_j$ in this monomial by $g_j$ for each $j$, we obtain, regardless of the chosen monomial, $t( g)$. In this notation, for any $\sigma\in {\rm End}(\Sigma)$, $\gamma\in {\rm End}(G)$,  formulas \eqref{definii} are rewritten as follows:
\begin{equation}
\begin{split}
  \sigma_X&\colon X\to X,\quad g=(g_1,\ldots, g_n)\mapsto \big(\sigma(s_1)(g),\ldots, \sigma(s_1)(g)\big),\\[-.5mm]
  \gamma_X&\colon X\to X, \quad (g_1,\ldots, g_n)\mapsto \big(\gamma(g_1),\ldots, \gamma(g_n)\big).
  \end{split}\label{mornew}
\end{equation}

If $G$ is an algebraic group, then $X$
is closed in $G^n$ and therefore
endowed with the structure of an algebraic variety. This structure does not depend on the choice of systems of generators, and the maps \eqref{mornew} are
morphisms. If $\Sigma=F_n$ and $G$ is a real Lie group, then
\eqref{mornew} are the differentiable mappings $G^n\to G^n$.

Some properties, selectively used below and in \cite{Po21}, \cite{Po23}, \cite{Po22}, are brought together in Proposition \ref{pro} for ease of reference.

\begin{proposition}\label{pro}
We maintain the notation introduced above in Sec\-tion {\rm \ref{wds}}. Let $\sigma$ and $\tau\in {\rm End}(F_n)$. Then the following hold.
\begin{enumerate}[\hskip 4.2mm\rm(a)]
\item\label{n1}
${(\sigma\circ \tau)_{X}^{\ }}=\tau_{X}\circ \sigma_{X}^{\ }$.
 \item\label{n2}
    $e_X={\rm id}$.
\item\label{n3}
$\sigma_{X}(X\cap S^{n})
\subseteq X\cap S^{n}$ for any subgroup $S$ of the group $G$.
\item\label{n6} Let $\theta\colon G\!\to\! H$ be a group homomorphism
and let $Y\!:=\!{\rm Hom}(\Sigma, H)\break \subseteq H^{n}$. Then the map
\begin{align*}
{\theta}_n\colon X
\to Y, \quad
(g_1,\ldots, g_n)
\mapsto (\theta(g_1),\ldots, \theta(g_n))
\end{align*}
is ${\rm End}(\Sigma)$-equivariant, i.e.,
${\theta}_n\circ \sigma_{X}=\sigma_{Y}\circ {\theta}_n.$
\item \label{n5}
If $\sigma={\rm int}_t$ for $t\in \Sigma$, then
the following properties of an element
\begin{equation}\label{gh}
x=(g_1,\ldots, g_n)\in X\subseteq G^n
\end{equation}
are equivalent:
\begin{enumerate}[\hskip 0mm\rm(a)]
\item[{\rm (\ref{n5}${}_1$)}] $\sigma_X(x)=x$;
\item[{\rm (\ref{n5}${}_1$)}] $t(x)\in \bigcap_{i=1}^n{\mathscr C}_G(g_i)$.
\end{enumerate}
\end{enumerate}
In statements {\rm (f)} and {\rm (g)}, it is assumed that $\Sigma=F_n$.
\begin{enumerate}[\hskip 4.2mm\rm(f)]
\item[\rm(f)]\label{n4}
The following properties of element  {\rm \eqref{gh}} are equivalent:
\begin{enumerate}[\hskip 0mm\rm(a)]
\item[{\rm (f${}_1$)}] $\sigma_X(x)=x$ for each $\sigma\in {\rm Aut}(F_n)$;
\item[{\rm (f${}_2)$}] if $n>1$, then $g_1=\cdots=g_n=e$, and if $n=1$, then $g_1^2=e$.
\end{enumerate}
\item[\rm(g)]\label{n7} The multiplication in $X=G^n$ has the property:
\begin{equation*}
\mbox{$\sigma_{X}(xz)=\sigma_{X}(x)\sigma_{X}(z)$ for all $x\in
X=G^n$, $z\in {\mathscr C}(X)$.}
\end{equation*}
 In particular, the restriction of
$\sigma_{X}$ to the group
${\mathscr C}(X)$ is its endo\-mor\-phism.
\end{enumerate}
\end{proposition}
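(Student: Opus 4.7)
My overall approach is to treat parts (a)--(d), (e), and (g) as straightforward unwindings of the definitions \eqref{definii}--\eqref{mornew}, using only the fact that word evaluation $w(g_1,\ldots,g_n)$ is a functorial operation; the substantive work sits in part (f), which requires picking concrete Nielsen generators of ${\rm Aut}(F_n)$.

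For (a), the defining identity $\sigma_X(x)=x\circ\sigma$ gives $(\sigma\circ\tau)_X(x)=(x\circ\sigma)\circ\tau=\tau_X(\sigma_X(x))$, and (b) follows from $x\circ{\rm id}=x$. For (c), each coordinate $\sigma(s_j)(g)$ of $\sigma_X(g)$ is a Laurent monomial \eqref{Lora} in the $g_i$'s, so if the $g_i$ lie in a subgroup $S$, so does the monomial. For (d) I use the tautology that a group homomorphism $\theta$ commutes with word evaluation, $\theta(w(g_1,\ldots,g_n))=w(\theta(g_1),\ldots,\theta(g_n))$, which coordinatewise yields $\theta_n\circ\sigma_X=\sigma_Y\circ\theta_n$. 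For (e), substituting $\sigma={\rm int}_t$ gives $\sigma(s_j)=ts_jt^{-1}$ and hence $\sigma(s_j)(g)=t(g)g_jt(g)^{-1}$, so the fixed-point condition becomes precisely $t(g)\in\bigcap_{i=1}^n\mathscr C_G(g_i)$. For (g), expanding $\sigma(f_j)$ as a monomial in the $f_i$ and using centrality of each $z_i\in\mathscr C(G)$ to pull every $z_{i_k}^{\varepsilon_k}$ past every $g_{i_l}^{\varepsilon_l}$ produces $\sigma(f_j)(gz)=\sigma(f_j)(g)\sigma(f_j)(z)$; restricting to $z\in\mathscr C(X)=\mathscr C(G)^n$ then gives the endomorphism assertion.

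The main obstacle is part (f). The direction (f$_2$)$\Rightarrow$(f$_1$) is immediate: for $n>1$, any word in $f_1,\ldots,f_n$ evaluated at $(e,\ldots,e)$ is $e$; for $n=1$, $g_1^2=e$ forces $g_1=g_1^{-1}$, while the nontrivial element of ${\rm Aut}(F_1)$ acts by inversion. For (f$_1$)$\Rightarrow$(f$_2$) with $n=1$ the same remark returns $g_1^2=e$. For $n\geqslant 2$, my plan is to apply the fixed-point hypothesis to carefully chosen Nielsen generators of ${\rm Aut}(F_n)$: first, the transpositions $f_i\leftrightarrow f_k$ equalise all coordinates to a common value $a\in G$; then the single Nielsen move $f_1\mapsto f_1f_2$, $f_j\mapsto f_j$ for $j\geqslant 2$, forces $a=a^2$ in the first coordinate, hence $a=e$. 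The delicate point is precisely this choice of a minimal-but-sufficient set of automorphisms: the transpositions alone only equalise the coordinates, and a single product-type Nielsen move is what actually collapses the common value to the identity.
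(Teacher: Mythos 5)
Your proposal is correct and follows essentially the same route as the paper: all parts except (f) are direct unwindings of the definitions and of the fact that elements of $\Sigma$ are evaluated as noncommutative Laurent monomials, and (f) is settled by testing the fixed-point condition on explicit Nielsen-type automorphisms. The paper uses only the moves $\sigma_{ij}\colon f_i\mapsto f_if_j$, $f_l\mapsto f_l$ for $l\neq i$ (each of which already forces $g_j=e$ from $g_ig_j=g_i$), so your detour through the transpositions is unnecessary but harmless.
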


\begin{proof}
Statement (f) follows from the fact that for $n=1$, the only nonidentity
element of ${\rm Aut}(F_n)$ maps $f_1$ to $f_1^{-1}$, and
for $n\geqslant 2$,
for any $i, j\in\{1,\ldots, n\}$, $i\neq j$,
the element $\sigma_{ij}\in{\rm End}(F_n)$ defined by the formula
\begin{equation*}
\sigma_{ij}(f_{l})=\begin{cases}
f_l&\mbox{for $l\neq i$},\\
f_if_j&\mbox{for $l=i$},
\end{cases}
\end{equation*}
lies in ${\rm Aut}(F_n)$.

The rest of the statements follow directly from the definitions and the fact that
each element of $\Sigma$ is written as
a noncommutative Laurent monomial in $s_1,\ldots, s_n$.
\end{proof}


\subsection{The existence of categorical quotient}

 \begin{proposition}\label{existe}
 Let $\Sigma$ be a finitely generated group, let $G$ be an algebraic group \textup (not necessarily connected or
  affine\textup), let $R$ be an al\-geb\-raic subgroup of ${\rm Aut}(G)$,
  and let $X={\rm Hom}(\Sigma, G)$.
The categorical quotient {\rm\eqref{Lunan}} exists in each of the following two cases:
\begin{enumerate}[\hskip 4.2mm\rm (A)]
\item[\rm (F)] $R$ is finite;
\item[\rm (R)]  $G$ is affine and $R$ is reductive.
\end{enumerate}
If {\rm (F)} holds, then the categorical quotient {\rm\eqref{Lunan}} is the geometric quotient.
 If {\rm (R)} holds, then the variety $X/\!\!/R$ is affine. In each of cases {\rm (F)} and {\rm (R)}, the morphism $\pi_{X/\!\!/R}$
is surjective.
 \end{proposition}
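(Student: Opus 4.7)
The plan is to treat cases (F) and (R) separately, in each case reducing the claim to a standard construction, and then verify surjectivity of $\pi_{X/\!\!/R}$.

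\emph{Case (R).} Since $G$ is affine and $X$ is realized via \eqref{embe} as a closed subvariety of $G^n$, the variety $X$ is itself affine. The reductive group $R$ thus acts regularly on an affine variety, so Mumford's fundamental theorem of geometric invariant theory \cite{MF82} applies: the ring of invariants $k[X]^R$ is finitely generated, and $X/\!\!/R := {\rm Spec}(k[X]^R)$, equipped with the morphism dual to the inclusion $k[X]^R \hookrightarrow k[X]$, realizes the categorical quotient \eqref{Lunan}. It is affine by construction. For surjectivity, given any maximal ideal $\mathfrak{m} \subset k[X]^R$, reductivity (via the Reynolds operator) forces $1 \notin \mathfrak{m}\cdot k[X]$, so $\mathfrak{m}\cdot k[X]$ lies in some maximal ideal of $k[X]$, yielding a preimage point.

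\emph{Case (F).} Here $G$ need not be affine, so the argument above does not apply directly. Instead I would rely on the fact that every algebraic group is quasi-projective (by Chevalley's structure theorem), whence $G^n$ and its closed subvariety $X$ are quasi-projective. The finite group $R$ acts on $X$ via \eqref{mornew}, so every $R$-orbit is finite, and finite subsets of a quasi-projective variety are always contained in an affine open. This is precisely the hypothesis under which the geometric quotient $X/R$ exists via the classical construction of covering $X$ by $R$-stable affine opens $U$ and gluing the schemes ${\rm Spec}(k[U]^R)$ (cf.\ \cite{MF82}, \cite{PV94}). Since every geometric quotient is a categorical quotient, this furnishes \eqref{Lunan}, and surjectivity of $\pi_{X/\!\!/R}$ is tautological.

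The main point warranting care is the quasi-projectivity of $X$ in case (F), where $G$ may fail to be affine; once this reduction is in place, both cases amount to textbook invocations of GIT, and the remaining verification (that $R$ acts regularly on $X$, and that the action descends compatibly) is immediate from the explicit formulae \eqref{mornew}.
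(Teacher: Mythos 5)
Your argument follows essentially the same route as the paper's: in case (R) you note that $X$, being closed in $G^n$, is affine, and invoke \cite[Chap.\,1, \S 2]{MF82}; in case (F) you use quasi-projectivity of $\underline{G}$, hence of $X$, so that every finite $R$-orbit lies in an affine open, and then the classical construction of the quotient by a finite group (covering by $R$-stable affine opens and gluing the invariant spectra) yields a geometric, hence categorical, quotient with surjective projection --- this is exactly the paper's proof, which cites \cite{Ba54} for quasi-projectivity and \cite[Chap.\,III, Sect.\,12]{Se97} for the finite-quotient construction. Two small corrections are in order. First, the quasi-projectivity of an arbitrary algebraic group is a theorem of Barsotti--Chow, not an immediate consequence of Chevalley's structure theorem; the structure theorem only exhibits $G$ as an extension of an abelian variety by an affine group, and deducing quasi-projectivity from this still requires a genuine argument, so the fact should be cited as such. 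Second, your justification of the surjectivity of $\pi_{X/\!\!/R}$ in case (R) via the Reynolds operator is valid only in characteristic zero (more precisely, for linearly reductive $R$); in this paper $k$ is an algebraically closed field of arbitrary characteristic and reductivity is understood in the sense of \cite{MF82}, so in characteristic $p>0$ there is no Reynolds operator and one must argue via geometric reductivity (Haboush's theorem), as is done in \cite{MF82}, where finite generation of invariants, the categorical quotient property, and surjectivity are all established in every characteristic. Neither point alters the overall structure, which coincides with the paper's proof.
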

 \begin{proof}
 According to \cite{Ba54}, the variety $\underline{G}$ is quasi-pro\-jec\-tive. Hence, $X$, being closed in the product of several copies of $\underline{G}$,
  is quasi-projective as well. This implies the existence of the geometric factor
  in case (F) (see \cite[Chap.\,III, Sect. 12, Prop. 19, Ex.\,2]{Se97}). This factor is automatically categorical with the surjective morphism $\pi_{X/\!\!/R}$
  (see \cite[4.3]{PV94}, \cite[Sect.\,II, \S 6]{Bo91}).

 In case (R), the variety
  $\underline{G}$
 is affine. In view of the remark on closedness made in the previous paragraph,
  $X$ is affine as well. Ac\-cord\-ing to \cite[Chap.\,1, \S2]{MF82},
  from this and the reductivity of $R$ it follows the existence of the categorical quotient \eqref{Lunan}, the affineness of $X/\!\!/R$, and the surjectivity of $\pi_{X/\!\!/R}$.
\end{proof}

\subsection{The kernel of the action of \boldmath ${\rm Aut}(\Sigma)$ on ${\rm Hom}(\Sigma, G)/\!\!/R$ in cases (F) and (R): geometric description}\label{kerr}

Let $\Sigma$ be a finitely generated group and let $G$ be an algebraic group (not necessarily con\-nected or affine). Having fixed a system of $n$ gene\-ra\-tors in $\Sigma$, we identify $X={\rm Hom}(\Sigma, G)$ with a closed subset of $G^n$ as described in Section \ref{wds}. For any $w\in \Sigma$, $\gamma\in {\rm Aut}(G)$ and $i\in\{1,\ldots, n\}$, the closed set
\begin{equation}\label{worddn}
X_{w, \gamma, i}:=\{x=(g_1,\ldots, g_n)\in X\mid w(x)=\gamma(g_i)\}
\end{equation}
  is the fiber over $e$ of the morphism
$$X\to G, \quad x=(g_1,\ldots, g_n)\mapsto w(x)\gamma(g_i)^{-1}.$$
As it contains $(e,\ldots, e)$, it is nonempty.

From
\eqref{mornew} and \eqref{worddn} it follows that for any $\sigma\in {\rm Aut}(\Sigma)$ we have
\begin{equation}\label{intersee}
\textstyle \bigcap_{i=1}^n X_{\sigma(f_i), \gamma, i}=\{x\in X\mid \sigma_X(x)=\gamma(x)\}.
\end{equation}

The following Lemmas \ref{lemma} and \ref{categ} describe the kernel of the action of ${\rm Aut}(\Sigma)$ on ${\rm Hom}(\Sigma, G)/\!\!/R$ respectively in cases (F) and (R).

\begin{lemma}\label{lemma}
We retain the notation and conventions introduced in this section.
Let $R$ be a finite subgroup of ${\rm Aut}(G)$.
The following properties of an element $\sigma\in {\rm Aut}(\Sigma)$ are equivalent:
\begin{enumerate}[\hskip 4.2mm\rm(a)]
\item\label{nonef1} $\sigma$ lies in the kernel of the action of  ${\rm Aut}(\Sigma)$ on $X/\!\!/R$;
\item\label{nonef6} $\sigma_X(\mathcal O)=\mathcal O$ for every $R$-orbit $\mathcal O$ in $X$;
\item\label{nonef2}
    for every irreducible component $Y$ of the variety $X$ there is an element $\gamma\in R$ such that
\begin{equation}\label{===n}
Y\subseteq \textstyle \bigcap_{i=1}^n X_{\sigma(f_i), \gamma, i}.
\end{equation}
\end{enumerate}
\end{lemma}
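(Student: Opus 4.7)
The plan is to establish the chain (a) $\Leftrightarrow$ (b) $\Leftrightarrow$ (c), all via the key identity already noted in \eqref{intersee}, namely that $\bigcap_{i=1}^n X_{\sigma(f_i),\gamma,i}$ is precisely the closed subset $\{x\in X\mid \sigma_X(x)=\gamma_X(x)\}$. This reformulation lets one pass freely between the ``equational'' description in (c) and the orbit-theoretic description in (b).

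For (a) $\Leftrightarrow$ (b) the crucial input is Proposition \ref{existe}: in case (F), the morphism $\pi:=\pi_{X/\!\!/R}$ is a \emph{geometric} quotient, so its point fibers are exactly the $R$-orbits in $X$. By \eqref{action//R}, $\sigma$ lies in the kernel of the action iff $\sigma_{X/\!\!/R}={\rm id}$, and by the intertwining \eqref{pssp1} this holds iff $\pi(\sigma_X(x))=\pi(x)$ for every $x\in X$, i.e., iff $\sigma_X(x)\in R\cdot x$ for every $x\in X$. Since $\sigma_X$ commutes with the $R$-action (and is a bijection), this is the same as saying that $\sigma_X$ setwise fixes every $R$-orbit, which is condition (b).

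For (b) $\Rightarrow$ (c), the starting observation rewrites (b) as the set equality
\[
X\;=\;\bigcup_{\gamma\in R}\,\bigcap_{i=1}^n X_{\sigma(f_i),\gamma,i},
\]
a \emph{finite} union of closed subsets, since $R$ is finite. Intersecting with an irreducible component $Y$ of $X$ exhibits $Y$ as a finite union of closed subvarieties, and irreducibility forces one of them to coincide with $Y$; this is exactly \eqref{===n}. Conversely, if (c) holds for every irreducible component, then $X=\bigcup_Y Y$ is contained in $\bigcup_{\gamma\in R}\bigcap_{i=1}^n X_{\sigma(f_i),\gamma,i}$, so every $x\in X$ satisfies $\sigma_X(x)=\gamma_X(x)$ for some $\gamma\in R$; hence $\sigma_X(R\cdot x)\subseteq R\cdot x$ for every orbit, with equality following from the bijectivity of $\sigma_X$ and finiteness of $R$-orbits.

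There is no substantial technical obstacle: the proof is a direct unpacking of the geometric-quotient property combined with the elementary fact that an irreducible variety cannot be written as a proper finite union of closed subsets. The one subtlety to keep track of is that Proposition \ref{existe} supplies a \emph{geometric} (not merely categorical) quotient in case (F), which is exactly what identifies the point set of $X/\!\!/R$ with the $R$-orbit space of $X$ and makes the translation (a) $\Leftrightarrow$ (b) available.
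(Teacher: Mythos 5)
Your proposal is correct and follows essentially the same route as the paper: the geometric-quotient property from Proposition \ref{existe} identifies fibers of $\pi_{X/\!\!/R}$ with $R$-orbits to give (a)$\Leftrightarrow$(b), this is then translated via \eqref{intersee} into the finite-union equality $X=\bigcup_{\gamma\in R}\bigcap_{i=1}^n X_{\sigma(f_i),\gamma,i}$, and irreducibility of the components yields (c). The only cosmetic difference is that you obtain $\sigma_X(\mathcal O)=\mathcal O$ from injectivity on finite orbits, where the paper notes that $\sigma_X$ restricts to an $R$-equivariant isomorphism between fibers; both arguments are fine.
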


\begin{proof}
In view of Proposition \ref{existe}, each fiber of the mor\-phism $\pi_{X/\!\!/R}$ is an $R$-orbit in $X$ and vice versa.
Since the actions of ${\rm Aut}(\Sigma)$ and $R$  on $X$ commute, it follows from \eqref{pssp1} that
for each point $b\in X/\!\!/R$, the restriction of the morphism $\sigma_X$ to the orbit
$\pi_{X/\!\!/R}^{-1}(b)$ is
its $R$-equivariant isomorphism with the orbit
$\pi_{X/\!\!/R}^{-1}(\sigma_{X/\!\!/R}(b))$. This proves the equivalence of the conditions \eqref{nonef1}
and \eqref{nonef6} and, given
\eqref{intersee},
their equivalence
to the equality
\begin{equation}\label{X=si}
\textstyle X=\bigcup_{\gamma\in R}\big(\bigcap_{i=1}^n X_{\sigma(f_i), \gamma, i}\big).
\end{equation}

\eqref{nonef1}$\Rightarrow$\eqref{nonef2}
If the equality \eqref{X=si} holds, then each irreducible component $Y$ of $X$ is the union of closed subsets of the form
\begin{equation}
\textstyle Y\cap \big(\bigcap_{i=1}^n X_{\sigma(f_i), \gamma, i}\big),\;\; \mbox{where $\gamma\in R$}.
\end{equation}
Since the group $R$ is finite, there are finitely many of these subsets. The irreducibility of
$Y$ therefore implies that $Y$ coincides with
one of them. Hence, \eqref{===n} holds for some $\gamma\in R$.

\eqref{nonef2}$\Rightarrow$\eqref{nonef1}
If \eqref{nonef2} holds, then the union of all irreducible components of $X$ lies on the right-hand side of the equality
\eqref{X=si}, i.e., this right-hand side contains $X$. The reverse inclusion is obvious.
Hence, the equality \eqref{X=si} holds.
\end{proof}

\begin{lemma}\label{categ}
Retaining the notation and conventions introduced in this section,
we assume that the group $G$ is affine. Let $R$ be a reductive algebraic subgroup of ${\rm Aut}(G)$.
The following properties of an element $\sigma\in {\rm Aut}(\Sigma)$ are equivalent:
\begin{enumerate}[\hskip 4.2mm\rm(a)]
\item\label{nonef3} $\sigma$ lies in the kernel of the action of ${\rm Aut}(\Sigma)$  on $X/\!\!/R$;
\item\label{nonef5} $\sigma_X(\mathcal O)=\mathcal O$ for every closed $R$-orbit $\mathcal O$ in $X$;
\item\label{nonef4} each closed $R$-orbit in $X$ belongs to the set
\begin{equation}\label{===}
\textstyle\bigcup_{\gamma\in R}\big(\bigcap_{i=1}^n X_{\sigma(f_i), \gamma, i}\big).
\end{equation}
\end{enumerate}
\end{lemma}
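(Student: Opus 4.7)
The plan is to establish (a) $\Leftrightarrow$ (b) $\Leftrightarrow$ (c), following the structure of the proof of Lemma \ref{lemma} but with the crucial difference that in case (R) the fibers of $\pi_{X/\!\!/R}$ are no longer single $R$-orbits, so one must work with \emph{closed} $R$-orbits throughout. The two key inputs are: the standard GIT fact that for a reductive group $R$ acting on an affine variety $X$, the categorical quotient separates closed orbits and its closed points are in bijection with closed $R$-orbits (see \cite[Chap.\,1, \S2]{MF82} or \cite[4.6]{PV94}); and the identity \eqref{intersee} combined with the fact that the actions of $\mathrm{Aut}(\Sigma)$ and $R$ commute, which makes $\sigma_X$ an $R$-equivariant automorphism of the variety $X$.

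For (a) $\Leftrightarrow$ (b), I would argue as follows. Because $\sigma_X$ is an $R$-equivariant variety isomorphism (its inverse being $(\sigma^{-1})_X$), it permutes the closed $R$-orbits in $X$. By the GIT correspondence just mentioned, these orbits are in bijection with the closed points of $X/\!\!/R$, the bijection being induced by $\pi_{X/\!\!/R}$. The relation \eqref{pssp1} then says that this permutation of closed orbits corresponds, under the bijection, to the permutation of closed points effected by $\sigma_{X/\!\!/R}$. Since $k$ is algebraically closed, $\sigma_{X/\!\!/R}$ is the identity morphism if and only if it fixes every closed point, which is precisely condition (b).

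For (b) $\Leftrightarrow$ (c), I would use \eqref{intersee} to rewrite the set \eqref{===} as $\{x \in X \mid \sigma_X(x) \in R \cdot x\}$. If (b) holds, then for any closed orbit $\mathcal{O}$ and any $x \in \mathcal{O}$ we have $\sigma_X(x) \in \sigma_X(\mathcal{O}) = \mathcal{O} = R \cdot x$, so $\mathcal{O}$ is contained in the set \eqref{===}. Conversely, if $\mathcal{O}$ is contained in \eqref{===}, then any $x \in \mathcal{O}$ satisfies $\sigma_X(x) = \gamma(x)$ for some $\gamma \in R$; $R$-equivariance of $\sigma_X$ then gives $\sigma_X(\mathcal{O}) = \sigma_X(R \cdot x) = R \cdot \sigma_X(x) = R \cdot \gamma(x) = R \cdot x = \mathcal{O}$.

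The main conceptual point (as opposed to a computational obstacle) is the need to restrict to closed orbits: in case (R) distinct $R$-orbits can share the same image in $X/\!\!/R$, so the direct analog of Lemma \ref{lemma}(\ref{nonef6}) asking that $\sigma_X$ preserve \emph{every} $R$-orbit would be strictly stronger than (a), and similarly the analog of (\ref{nonef2}) asserting that $X$ itself lies in the union \eqref{===} would be too strong. Replacing "all orbits" by "closed orbits" in both (b) and (c) is exactly what is required for the equivalences to hold, and the irreducibility argument used in the proof of Lemma \ref{lemma} is no longer needed because the GIT closed-orbit correspondence already selects the right representative in each fiber of $\pi_{X/\!\!/R}$.
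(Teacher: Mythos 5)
Your proposal is correct and follows essentially the same route as the paper: the equivalence (a)$\Leftrightarrow$(b) rests on the same GIT fact (each fiber of $\pi_{X/\!\!/R}$ contains a unique closed $R$-orbit, giving the closed-orbit/point correspondence) combined with the equivariance relation \eqref{pssp1}, and the equivalence with (c) is obtained from \eqref{intersee} and the commutation of $\sigma_X$ with the $R$-action, exactly as in the paper's (terser) argument. Your explicit spelling-out of (b)$\Leftrightarrow$(c) and of why only closed orbits matter is a faithful expansion of the same proof, not a different one.
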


\begin{proof}
For every $b\in X/\!\!/R$, the fiber
$\pi^{-1}_{X/\!\!/R}(b)$
of the surjective (see Pro\-po\-sition \ref{existe})
morphism $\pi_{X/\!\!/R}$ is an $R$-invariant closed
subset of $X$, which contains a unique closed $R$-orbit $\mathcal O_b$
(see \cite[\S2 and Ap\-pend.\,1B]{MF82}. The restriction of
$\sigma_X$ to
$\pi_{X/\!\!/R}^{-1}(b)$ is an $R$-equivariant iso\-mor\-phism with
the fiber $\pi_{X/\!\!/R}^{-1}(\sigma_{X/\!\!/R}(b))$.
 In view of the uniqueness of closed orbits in the fibers,
this means that $\sigma_X(\mathcal O_b)=\mathcal O_{\sigma_{X/\!\!/S}(b)}$. Therefore, the equalities $\sigma_{X/\!\!/S}(b)=b$
and $\sigma_X(\mathcal O_b)=\mathcal O_b$ are equivalent. This proves
\eqref{nonef3}$\Leftrightarrow$\eqref{nonef5}. In turn, this and
\eqref{intersee} imply
\eqref{nonef3}$\Leftrightarrow$\eqref{nonef4}.
\end{proof}

\begin{corollary}\label{intker}
If the conditions of Lemma
{\rm\ref{categ}} hold and $R={\rm Int}(G)$, then
${\rm Int}(\Sigma)$ lies in the kernel of the action of ${\rm Aut}(\Sigma)$ on $X/\!\!/R$.
\end{corollary}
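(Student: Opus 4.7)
The plan is to verify condition \eqref{nonef4} of Lemma \ref{categ} for every element $\sigma={\rm int}_t$ of ${\rm Int}(\Sigma)$, $t\in\Sigma$; the equivalence $(\ref{nonef4})\Leftrightarrow(\ref{nonef3})$ in that lemma will then immediately yield the corollary.

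First I would compute $\sigma_X$ explicitly. With $s_1,\ldots,s_n$ the fixed system of generators of $\Sigma$ from Section \ref{wds}, we have $\sigma(s_i)=t s_i t^{-1}$ in $\Sigma$, and so for $x=(g_1,\ldots,g_n)\in X$ the element $\sigma(s_i)$ evaluated at $x$ equals $t(x)g_i t(x)^{-1}$. Hence by the first formula in \eqref{mornew},
$$\sigma_X(x)\;=\;\bigl(t(x)g_1 t(x)^{-1},\ldots,t(x)g_n t(x)^{-1}\bigr)\;=\;\gamma(x),$$
where $\gamma := {\rm int}_{t(x)}\in {\rm Int}(G)=R$ acts coordinatewise on $G^n\supseteq X$.

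Thus, although the element $\gamma$ depends on $x$, every $x\in X$ satisfies $\sigma_X(x)=\gamma(x)$ for some $\gamma\in R$. By \eqref{intersee} this translates into
$$X\;=\;\bigcup_{\gamma\in R}\bigcap_{i=1}^{n} X_{\sigma(f_i),\,\gamma,\,i},$$
so \emph{every} subset of $X$ --- in particular every closed $R$-orbit --- is contained in the set \eqref{===}. This is property \eqref{nonef4} of Lemma \ref{categ}, and invoking the lemma completes the proof.

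There is no real obstacle here: the content boils down to the single observation that inner automorphisms of $\Sigma$ act on $X\subseteq G^n$ by coordinatewise $G$-conjugation, which is exactly the $R$-action being divided out, so the induced action on $X/\!\!/R$ is forced to be trivial.
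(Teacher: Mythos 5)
Your proof is correct and is essentially the paper's argument: both rest on the single observation that $\sigma={\rm int}_t$ acts on $X\subseteq G^n$ by coordinatewise conjugation by $t(x)\in G$, i.e.\ $\sigma_X(x)=\gamma_X(x)$ with $\gamma={\rm int}_{t(x)}\in R={\rm Int}(G)$, and then conclude by Lemma \ref{categ}. The only (immaterial) difference is that you check condition \eqref{nonef4} of that lemma via \eqref{intersee}, whereas the paper phrases the same observation as ``$x$ and $\sigma_X(x)$ lie in the same $R$-orbit'' and uses the equivalence \eqref{nonef3}$\Leftrightarrow$\eqref{nonef5}.
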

\begin{proof}
If $\sigma\in {\rm Int}(\Sigma)$, then \eqref{mornew} implies that $x$ and $\sigma_X(x)$ lie in the same $R$-orbit for each $x \in X$. The assertion therefore follows from the equivalence
of conditions \eqref{nonef3} and \eqref{nonef5} in Lemma \ref{categ}.
\end{proof}

\subsection{The faithfulness of the action of \boldmath ${\rm Aut}(F_n)$ on
${\rm Hom}(F_n, G)$: algebraic
criterion}

 \begin{theorem}\label{criterion}
 Let $G$ be a group and let $X\!=\!{\rm Hom}(F_n,G)$.
 The fol\-low\-ing properties are equivalent:
  \begin{enumerate}[\hskip 4.2mm\rm(a)]
  \item the action of
${\rm Aut}(F_n)$ on $X$ is faithful;
  \item if $n\geqslant 2$, then in an $n$-letter alphabet
  there is no nonempty irreducible word that is the identity
  in $G$, and if $n=1$, then $G$ contains an element of order $\geqslant 3$.
  \end{enumerate}
 \end{theorem}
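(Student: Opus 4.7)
The plan is to treat the cases $n=1$ and $n\geq 2$ separately, since ${\rm Aut}(F_1)\cong \mathbb Z/2\mathbb Z$ is governed by considerations different from those for $n\geq 2$. For $n=1$, the unique nontrivial element of ${\rm Aut}(F_1)$ sends $f_1$ to $f_1^{-1}$, and its action on $X=G$ takes $g_1$ to $g_1^{-1}$; this is the identity map precisely when $g^2=e$ for every $g\in G$, i.e. when every element of $G$ has order at most $2$. So the action is faithful if and only if $G$ contains an element of order $\geq 3$, which matches~(b).

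For $n\geq 2$, I would first establish (b)$\Rightarrow$(a). Assume $\sigma\in{\rm Aut}(F_n)$ acts trivially on $X=G^n$. By formulas \eqref{mornew}, this means that for every $g=(g_1,\ldots,g_n)\in G^n$ and every~$i$ one has $\sigma(f_i)(g)=g_i$, so the word $w_i:=\sigma(f_i)f_i^{-1}\in F_n$ evaluates to $e$ under every substitution $g\in G^n$. If $\sigma(f_i)\neq f_i$ for some~$i$, then the reduced form of $w_i$ is a nonempty reduced word in the $n$-letter alphabet $f_1,\ldots,f_n$ that is an identity of~$G$, contradicting~(b). Hence $\sigma(f_i)=f_i$ for every~$i$, and $\sigma={\rm id}$.

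The converse (a)$\Rightarrow$(b) contains the essential idea, and I would prove it by contraposition. Suppose $w\in F_n$ is a nonempty reduced word that is an identity of $G$. Set $\sigma:={\rm int}_w\in {\rm Aut}(F_n)$. Since $n\geq 2$, the center of $F_n$ is trivial, so $\sigma\neq {\rm id}$. On the other hand, Proposition \ref{pro}(e) (applied with $t=w$) asserts that $\sigma_X(x)=x$ is equivalent to $w(x)\in \bigcap_{i=1}^n {\mathscr C}_G(g_i)$; since $w$ is an identity of $G$, $w(x)=e$ for every $x$, so the centralizer condition is automatic. Thus $\sigma$ lies in the kernel of the action, contradicting~(a).

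There is no serious obstacle: the crux is the observation that the inner automorphism ${\rm int}_w$ of $F_n$ by an identity $w$ of $G$ necessarily acts trivially on $X={\rm Hom}(F_n,G)$, while it remains a nontrivial automorphism of $F_n$ thanks to the trivial center of $F_n$ for $n\geq 2$. Proposition \ref{pro}(e) makes this transparent, and the only technical care needed is in the routine observation that $\sigma(f_i)f_i^{-1}$ has nonempty reduced form exactly when $\sigma(f_i)\neq f_i$.
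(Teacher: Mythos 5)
Your proposal is correct and follows essentially the same route as the paper: for $n\geqslant 2$ the kernel element $\sigma$ yields the identity word $\sigma(f_j)f_j^{-1}$, and conversely an identity $w$ of $G$ gives the nontrivial inner automorphism ${\rm int}_w$ (nontrivial since ${\mathscr C}(F_n)$ is trivial) acting trivially on $X$. The only cosmetic differences are that you argue the $n=1$ case directly instead of citing Proposition~\ref{pro}(f) and invoke Proposition~\ref{pro}(e) where the paper checks $\sigma_X(x)=x$ by hand.
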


 \begin{proof}
 We use the notation of Section \ref{wds} with $\Sigma=F_n$ and $s_j=f_j$ for all $j$.

  For $n=1$, the equivalence of (a) and (b) follows from Proposition \ref{pro}(f).
  Consider the case $n\geqslant 2$.

 (a)$\Rightarrow$(b)
 Suppose, arguing by contradiction, that (a) holds, but in an $n$-letter alphabet
there exists a nonempty irreducible word that is the identity
in $G$. So there is
  a nonidentity element $w\in F_n$ such that
  \begin{equation}\label{w(x)=2n}
  w(x)=e\;\; \mbox{for each $x\in X$}.
  \end{equation}

 The element $\sigma:={\rm int}_w\in {\rm Aut}(F_n)$ is different from the identity because the group ${\mathscr C}(F_n)$ is trivial for $n\geqslant 2$ (cf. \cite[Chap.\,I, Prop. 2.19]{LS77}), and $w\neq e$.
  However, from \eqref{w(x)=2n}
  it follows that $\sigma_X(x)=x$ for each $x\in X$, i.e.,
  that $\sigma$ lies in the kernel of the action of ${\rm Aut}(F_n)$ on $X$. This contradicts\;(a).

 (b)$\Rightarrow$(a)
Suppose, arguing by contradiction, that (b) holds, but
  the kernel of the action of ${\rm Aut}(F_n)$
on $X$  contains a nonidentity element $\sigma\in {\rm Aut}(F_n)$, so that we have (see \eqref{mornew}),
  \begin{equation}\label{ba}
  \sigma(f_i)(x)=f_i(x) \;\;\mbox{for all $x\in X$ and $i$.}
  \end{equation}
In view of $\sigma\neq e$, there exists $f_j$ for which $\sigma(f_j)\neq f_j$, i.e., $w:=
\sigma(f_j)f_j^{-1}$ is a nonidentity element of the group $F_n$. At the same time, \eqref{ba} implies that this $w$ satisfies condition \eqref{w(x)=2n}. Therefore, in
the alphabet $f_1,\ldots, f_n$ there is a
nonempty irreducible word that is the identity in $G$. This contradicts (b).
 \end{proof}

 \begin{corollary}\label{solv}
 For each virtually solvable group $G$, the action
  of ${\rm Aut}(F_n)$ on $X:={\rm Hom}(F_n, G)$,
  $n\geqslant 2$, it is nonfaithful.
 \end{corollary}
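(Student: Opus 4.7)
By Theorem \ref{criterion} (applied with $n\geqslant 2$), the action of ${\rm Aut}(F_n)$ on $X={\rm Hom}(F_n,G)$ is nonfaithful if and only if $G$ satisfies a nontrivial group law in $n$ letters, i.e., there exists a nonidentity element $w\in F_n$ whose evaluation at every $(g_1,\ldots,g_n)\in G^n$ equals $e$. The plan is therefore to extract such a law $w$ from the virtual solvability of $G$.

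Fix a solvable normal subgroup $N\triangleleft G$ of finite index $m$ with derived length $d$, so that $g^m\in N$ for every $g\in G$ and $N^{(d)}=\{e\}$. Define the iterated commutator words in the free group inductively by $c_1(x_1,x_2):=[x_1,x_2]$ and
\[
c_{k+1}(x_1,\ldots,x_{2^{k+1}}):=\bigl[c_k(x_1,\ldots,x_{2^k}),\,c_k(x_{2^k+1},\ldots,x_{2^{k+1}})\bigr].
\]
Induction on $k$ shows that $c_d$ evaluates into $N^{(d)}=\{e\}$ on any tuple from $N$. Consequently the word
\[
W(y_1,\ldots,y_{2^d}) := c_d(y_1^m,\ldots,y_{2^d}^m)\in F_{2^d}
\]
is a law of $G$, because $g_i^m\in N$ for every $g_i\in G$.

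It remains to realize $W$ as a nonidentity element of $F_n$. Since $\langle f_1,f_2\rangle\subseteq F_n$ contains free subgroups of every finite rank (for instance, the elements $\{f_1^i f_2 f_1^{-i}\colon 1\leqslant i\leqslant 2^d\}$ freely generate one), fix an embedding $\iota\colon F_{2^d}\hookrightarrow F_n$ with image in $\langle f_1,f_2\rangle$ and set $w:=\iota(W)$. Clearly $w$ still evaluates identically to $e$ on $G^n$. By injectivity of $\iota$, the nontriviality of $w$ reduces to that of $W\in F_{2^d}$, which in turn follows from two classical facts about free groups: the assignment $x_i\mapsto x_i^m$ extends to an injective endomorphism of $F_{2^d}$ (its image being the free subgroup freely generated by $x_1^m,\ldots,x_{2^d}^m$), and the iterated commutator $c_d(x_1,\ldots,x_{2^d})$ is a nontrivial element of $F_{2^d}^{(d)}$ (a consequence of the infinite derived length of $F_{2^d}$). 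The main technical point is precisely this nontriviality of $W$; once it is verified, Theorem \ref{criterion} yields the desired nonfaithfulness.
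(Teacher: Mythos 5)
Your proof is correct and takes essentially the same route as the paper: pass to a finite-index normal solvable subgroup $N$ (the paper gets it as the normal core of a finite-index solvable subgroup, a one-line justification you leave implicit), use $g^m\in N$ to produce a nontrivial law of $G$ inside the free group, and conclude by Theorem \ref{criterion}. The only difference is that the paper simply cites \cite[14.65]{Ne67} for a two-variable law of solvable groups and substitutes $x^d,y^d$, whereas you build the iterated-commutator law explicitly in $2^d$ variables and then compress it into $\langle f_1,f_2\rangle\subseteq F_n$ via an embedding $F_{2^d}\hookrightarrow F_2$; both steps, and your verification of nontriviality (the $x_i^m$ form a free basis, $c_d\neq e$), are standard and sound.
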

 \begin{proof}
  By the definition of virtual solvable group, $G$ has a solvable sub\-group $S$ of a finite index $d$. We can (and shall) assume that
  $S$ is normal, replacing it with the intersection of all subgroups conjugate to it. Since $S$ is solvable, in the alphabet of two letters $x, y$ there exists a
  nonempty irreducible word $r(x, y)$, which is the identity
  in $S$ (see\;\cite[14.65]{Ne67}).
   It follows from the normality of $S$ that $g^d\in S$ for each $g\in G$.
   Hence, the nonempty irreducible word $r(x^d, y^d)$ is
   the identity in $G$. The claim now follows from
  Theorem  \ref{criterion}.
 \end{proof}

\subsection{
Proof of Theorem \ref{fg1}: the case of trivial
subgroup \boldmath $R$}\label{p0}

  To prove Theorem \ref{fg1}, we first need to consider a special case of trivial subgroup $R$.
   We will prove a more general statement concerning
   not only algebraic groups, but also real Lie groups.

 \begin{theorem}\label{noquon}
Let $X={\rm Hom}(F_n, G)$, $n\geqslant 2$, and
let $G$ be either an algebraic group
\textup (not necessarily connected or affine\textup) or
  a real Lie group with a finite number of connected components.
  Then the following pro\-per\-ties are equivalent:
  \begin{enumerate}[\hskip 4.2mm \rm(a)]
\item the action of
${\rm Aut}(F_n)$ on $X$ is faithful;
\item
the group $G^0$ is nonsolvable.
\end{enumerate}
If $G$ is a real Lie group, then the implication {\rm(b)$\Rightarrow$(a)} is true even without the condition that the number of its connected components is finite.
 \end{theorem}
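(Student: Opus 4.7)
My plan is to deduce the theorem directly from the algebraic criterion of Theorem~\ref{criterion}, which says (for $n\geqslant 2$) that the action of ${\rm Aut}(F_n)$ on $X$ is faithful if and only if no nonempty reduced word in the $n$-letter alphabet is identically the identity when its letters run over $G$. Thus the task reduces to proving the purely group-theoretic equivalence: $G^0$ is nonsolvable if and only if $G$ satisfies no nontrivial law in $n$ variables.

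For the implication (a)$\Rightarrow$(b), I would argue contrapositively. Assume $G^0$ is solvable. In the algebraic setting, $G/G^0$ is automatically finite; in the Lie setting, finiteness of $G/G^0$ is part of our hypothesis. In either case $G$ is virtually solvable, so Corollary~\ref{solv} yields a nonempty reduced word in two letters that is identically trivial on $G$, and Theorem~\ref{criterion} then gives nonfaithfulness. This shows why (a)$\Rightarrow$(b) really does use the finite-component assumption in the Lie case.

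For the implication (b)$\Rightarrow$(a), note first that a word $w\in F_n$ is a law on $G$ if and only if it is a law on $G^0$ (substituting $e$ in any slot reduces one to the other), so I may replace $G$ by $G^0$ and assume $G$ is connected and nonsolvable. The strategy is then standard: exhibit a nonabelian free subgroup $F_2\hookrightarrow G$, since any such embedding forces the only possible law on $G$ to be the trivial one, which by Theorem~\ref{criterion} is exactly what is needed. To produce this $F_2$, I would pass to the semisimple quotient $G/{\rm Rad}(G)$, which is a nontrivial connected semisimple algebraic group or Lie group, and then apply the classical ping-pong construction (this is essentially Tits' alternative for linear groups in the algebraic case, and, in the real Lie case, the same ping-pong argument applied to the adjoint representation of a Levi factor, which is available since a connected semisimple Lie group has finite center and a faithful linear adjoint quotient). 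Crucially, this whole construction lives inside $G^0$ and makes no reference to $G/G^0$, which is why the final sentence of the theorem is true without the finiteness hypothesis on the number of connected components.

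The main obstacle is the last step, i.e., producing $F_2$ in a connected nonsolvable algebraic or Lie group. I expect the proof in the paper to cite this as a known result (Tits' alternative, or an early theorem of Kuranishi/Zassenhaus in the Lie case), rather than reprove it, since the rest of the argument is a routine combination of Theorem~\ref{criterion} and Corollary~\ref{solv}.
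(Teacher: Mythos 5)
Your treatment of (a)$\Rightarrow$(b) and of the real Lie group case of (b)$\Rightarrow$(a) matches the paper: finiteness of $G/G^0$ plus Corollary~\ref{solv} and Theorem~\ref{criterion} gives the first implication, and in the Lie case the paper likewise produces a free subgroup of $G^0$ (citing Epstein \cite{Ep71}) to rule out any law. The reduction to the connected semisimple quotient via Chevalley's theorem and the radical is also the same as in the paper.

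However, in the algebraic-group case of (b)$\Rightarrow$(a) your key step fails: over an arbitrary algebraically closed field $k$ a connected nonsolvable (even simple) algebraic group need not contain \emph{any} free subgroup, so neither Tits' alternative nor ping-pong can produce the $F_2$ you need. The standard counterexample, which the paper itself points out in a remark, is $G={\rm SL}_d$ over $k=\overline{\mathbb F}_p$: every finitely generated subgroup of ${\rm GL}_d(\overline{\mathbb F}_p)$ lies in ${\rm GL}_d(\mathbb F_q)$ for some finite $q$, hence is finite, so $G$ is a torsion group with no free subgroups at all — yet the theorem asserts (correctly) that $G$ satisfies no nontrivial law. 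Tits' alternative is not contradicted, since it only says each finitely generated linear subgroup is free-containing or virtually solvable, and here every such subgroup is finite. To close this gap one needs a characteristic-free substitute, and the paper uses Borel's theorem on word maps \cite[Thm.\,B]{Bo83}: for a nontrivial connected semisimple $G$ and a nontrivial $w\in F_n$, $n\geqslant 2$, the morphism $X=G^n\to G$, $x\mapsto w(x)$, is dominant, so $w\equiv e$ forces $G$ to be trivial. (Your free-subgroup route can be salvaged only under extra hypotheses, e.g.\ for uncountable $k$ via \cite{BGGT12}, as the paper's remark notes; as written, your argument does not prove the theorem in the stated generality.)
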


 \begin{proof}
If $G$ is a real Lie group, then
  \begin{equation}\label{inf}
  [G:G^0]<\infty
  \end{equation}
  by the condition. If $G$ is an algebraic group, then \eqref{inf} is satisfied auto\-matically.
  It follows from \eqref{inf} that if $G^0$ is solvable, then $G$ is virtually solvable. Together with Corollary \ref{solv}, this proves implication
(a)$\Rightarrow$(b).

 (b)$\Rightarrow$(a)
Let the group $G^0$ be nonsolvable. In view of  Theorem \ref{criterion}, it is required to prove that in an alphabet of $n$ letters there is no non\-empty irreducible word that is
   the identity relation in $G$. Arguing by contradiction, suppose that such a word exists. Hence, there is a non\-trivial element $w\in F_n$ with the property\;\eqref{w(x)=2n}.

 Let $G$ be a connected real Lie group. Then,
   due to nonsolvability, $G^0$ contains a free subgroup of rank $n$ (see \cite[Thm.]{Ep71}). Let $g_1,\ldots, g_n$ be its free system of generators.
Then $w(g_1,\ldots, g_n)=e$ due to \eqref{w(x)=2n}, which contradicts the absence
of nontrivial relations between $g_1,\ldots, g_n$.

 Let now $G$ be an algebraic group.
By Chevalley's theorem, the algeb\-raic group $G^0$ contains the largest connected affine normal subgroup $G^0_{\rm aff}$, and $G^0/G^0_{\rm aff}$ is an Abelian variety.
Since the group $G^0$ is nonsolv\-able and the group $G^0/G^0_{\rm aff}$ is commutative (and therefore solvable), the group
$G^0_{\rm aff}$ is nonsolvable. Hence, $G^0_{\rm aff}$ does not coincide with its radical ${\rm Rad}(G^0_{\rm aff})$, and therefore,
$G^0_{\rm aff}/{\rm Rad}(G^0_{\rm aff})$ is a nontrivial connected semi\-simple algebraic group. This reduces the proof to the case where  $G$ is a nontrivial connected semisimple algebraic group. We will therefore further assume that this condition is met.
In view of \cite[Thm.\,B]{Bo83}, from it and the inequality $n\geqslant 2$
it follows that the morphism
\begin{equation*}\label{boldw}
X\to {G},\quad x\mapsto w(x)
\end{equation*}
 is dominant. In view of \eqref{w(x)=2n}, this means that
 the group
 $G$ is trivial, which is a contradiction.
 \end{proof}

 \begin{remark}
If $G$ is an nonsolvable algebraic group and the field $k$ is uncountable,
then $G$ contains a free subgroup of any finite rank
(see \cite[Thm.\,1.1]{BGGT12}, \cite[App.\,D]{BGGT15}), which means
that the same proof of the implication (b)$\Rightarrow$(a) in Theorem \ref{noquon}
as in the case of a real Lie group goes through. This proof is
given in the first  version \cite{Po21} of the present paper. However, in the general case,
$G$ may not contain a free subgroup (for example, this is the case for $G={\rm SL}_d$ if $k$ is the algebraic closure of a finite field, since then the order of every element of ${\rm SL}_d$
is finite).
\end{remark}

\begin{remark}
Without the condition that the number of connected components is finite, the implication (a)$\Rightarrow$(b) in Theorem \ref{noquon} is false. Indeed, take as $G$ the group $F_n$ considered as a real Lie group with $G^0=\{e\}$. Then ${\rm id}_{F_n}\in {\rm Hom}(F_n, F_n)=X$ and, for any $\sigma \in {\rm Aut}(F_n)$, we have
$\sigma_X({\rm id}_{F_n})=\sigma$ (see \eqref{definii}). Therefore, (a) holds, but (b) does not.
\end{remark}

\subsection{Proof of Theorem \ref{fg1}: general case}\label{p1}

 In view of the surjectivity and ${\rm Aut}(F_n)$-equivariance of the morphism $\pi_{X/\!\!/R}$ (see \eqref{Lunan}),
  the impli\-cation (a)$\Rightarrow$(b) follows from Theorem\;\ref{noquon}.

(b)$\Rightarrow$(a)
Let the group $G^0$ be nonsolvable. Arguing by contradiction, suppose that a nonidentity element $\sigma\in {\rm Aut}(F_n)$
lies in the kernel of the action of ${\rm Aut}(F_n)$ on $X/\!\!/R$. The variety $X$ is isomorphic to $\underline{G}^n$. It is clear that $(\underline{G}^0)^n$ is one of the irreducible components of the variety
$\underline{G}^n$. By virtue of what was said in Section \ref{wds}, this implies that $X^0:={\rm Hom}(F_n, G^0)$ is an ${\rm Aut}(F_n)$-invariant irreducible component of the variety $X$. In turn, in view of Lemma \ref{lemma} and formulas \eqref{intersee}, \eqref{mornew}, this implies the existence of an element $\gamma\in R$ such that
for every $i\in \{1,\ldots, n\}$, the following group identity  holds in $G^0$:
\begin{equation}\label{condi1}
\sigma(f_i)(g_1,\ldots, g_n)=\gamma(g_i)
\quad \mbox{for any $g_1,\ldots, g_n\in G^0$.}
\end{equation}
In particular, for every $g\in G^0$,
the equality obtained by substituting
$g_1=\cdots=g_n=g$ in \eqref{condi1} holds. Since $\sigma(f_i)$
has the form \eqref{Lora}, this means
the existence of an integer $d$ such that
the following group identity holds:
\begin{equation}\label{sgs1}
g^d=\gamma(g)
\quad\mbox{for each $g\in G^0$.}
\end{equation}

Notice that
\begin{equation}\label{neq}
d\neq 1\quad \mbox{and}\quad d\neq -1.
\end{equation}

Indeed, if $d\!=\!1$ then
from \eqref{sgs1} and \eqref{condi1}
it follows that $\sigma_{X^0}\!=\!{\rm id}_{X^0}$, i.e., $\sigma$ lies in the kernel of the action of ${\rm Aut}(F_n)$ on $X^0$.
Since $\sigma$ is a nonidentity element, and the group
$G^0$ is nonsolvable, this contradicts Theorem\;\ref{noquon}.

If $d=-1$, then
for any $g, h\in G^0$, the
equality
\begin{equation*}
h^{-1}g^{-1}=(gh)^{-1}\overset{\eqref{sgs1}}{=}\gamma(gh)
=\gamma(g)
\gamma(h)
\overset{\eqref{sgs1}}{=}g^{-1}h^{-1}
\end{equation*}
holds, meaning that the group $G^0$ is commutative contrary to its non\-so\-lvability.

Further, for any
positive integer $m$,  we obtain from \eqref{sgs1} by induction
the following group identity:
\begin{equation}\label{idenex}
g^{d^m}=\gamma^m(g)\quad \mbox{for each $g\in G^0$.}
\end{equation}

Since the group $R$ is finite, the order of
$\gamma$
is finite. Let $m$ in \eqref{idenex} be equal to this order. Then
\eqref{idenex} becomes the group identity
\begin{equation}\label{idenex1}
g^{d^m-1}=e\quad \mbox{for every $g\in G^0$.}
\end{equation}

Since $d^m-1\neq 0$ due to \eqref{neq}, from
\eqref{idenex1} we infer that
$G^0$ is a torsion group whose element orders are bounded from above.
Let us show that this contradicts the properties of the group $G^0$.

Indeed, as in the proof of Theorem \ref{noquon}
(whose notation we retain), the affine algebraic group $G^0_{\rm aff}$ is nonsolvable. Hence,
it contains a nontrivial semisimple element, and therefore, a torus of positive dimen\-sion (see \cite[Thms. 4.4, 11.10]{Bo91}).
But the set of orders of elements of the torsion subgroup of any torus of positive dimension is not bounded (see \cite[Prop.\,8.9]{Bo91}). This gives the required contradiction.
\quad $\square$

\subsection{Proofs of Corollary \ref{cor0n} and Proposition \ref{prpr}}\label{p2}

\begin{proof}[Proof of Corollary  {\rm\ref{cor0n}}]
Statements  \eqref{aaaa} and \eqref{bbbb} follow from Theorem \ref{fg1} and the next Proposition \ref{linea}.
\end{proof}

\begin{proposition}\label{linea}
Assume that a group
$H$ contains
${\rm Aut}(F_n)$. Then
\begin{enumerate}[\hskip 4.2mm\rm(i)]
\item\label{h1} $H$ contains $F_2$
  if $n\geqslant 2$;
\item\label{h2} $H$ contains $B_n$ if $n\geqslant 3$;
\item\label{h3} $H$ is not amenable if
$n\geqslant 2$;
\item\label{h4} $H$ is nonlinear if $n\geqslant 3$.
\end{enumerate}
\end{proposition}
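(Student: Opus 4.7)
My plan is to reduce each item to a known fact about ${\rm Aut}(F_n)$ itself, since the hypothesis $H \supseteq {\rm Aut}(F_n)$ immediately transports any subgroup of ${\rm Aut}(F_n)$ into $H$ (for items (i), (ii)), and conversely makes $H$ inherit any ``size'' obstruction suffered by ${\rm Aut}(F_n)$ (for items (iii), (iv)).

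For (i), I would use that $F_n$ has trivial center when $n\geqslant 2$ (see \cite[Chap.\,I, Prop.\,2.19]{LS77}, already cited in the proof of Theorem \ref{criterion}), so the natural map ${\rm Int}\colon F_n\to {\rm Aut}(F_n)$, $t\mapsto {\rm int}_t$, is injective. Hence $F_n\hookrightarrow {\rm Aut}(F_n)\hookrightarrow H$, and since $F_n$ contains $F_2$ for $n\geqslant 2$ (e.g.\ $\langle f_1, f_2\rangle$), so does $H$. For (ii), I would invoke the classical Artin embedding $B_n\hookrightarrow {\rm Aut}(F_n)$, which sends the standard generator $\sigma_i$ to the automorphism fixing $f_j$ for $j\neq i, i+1$ and sending $f_i\mapsto f_if_{i+1}f_i^{-1}$, $f_{i+1}\mapsto f_i$; its faithfulness is a standard theorem going back to Artin (see e.g.\ Birman's book on braids).

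For (iii), I would observe that $F_2$ is nonamenable and amenability passes to subgroups, so (i) already implies (iii). For (iv), I would cite the Formanek--Procesi theorem, which asserts that ${\rm Aut}(F_n)$ is nonlinear for $n\geqslant 3$; since linearity of a group passes to every subgroup, if $H$ were linear then ${\rm Aut}(F_n)$ would be linear too, contradicting that theorem.

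The main obstacle is not really the proof but the choice of references: items (i) and (iii) are essentially immediate, item (ii) is the classical Artin embedding, and item (iv) is the deep input (Formanek--Procesi), which one just quotes. So the proof is really a collation of well-known results, and my write-up would consist of at most a few sentences per item with the appropriate citations.
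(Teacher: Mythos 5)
Your proposal is correct and follows essentially the same route as the paper: embed $F_n\cong{\rm Int}(F_n)$ into ${\rm Aut}(F_n)$ using triviality of ${\mathscr C}(F_n)$ for (i), quote the classical embedding $B_n\hookrightarrow{\rm Aut}(F_n)$ for (ii), deduce (iii) from (i) via nonamenability of $F_2$, and deduce (iv) from the Formanek--Procesi nonlinearity of ${\rm Aut}(F_n)$, since linearity passes to subgroups. The only difference is the choice of reference for the Artin embedding (the paper cites Magnus--Karrass--Solitar), which is immaterial.
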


\begin{proof}\

If $n\geqslant 2$, then  ${\mathscr C}(F_n)$
is trivial and therefore,
${\rm Int}(F_n)$
is isomorphic to $F_n$.
This gives \eqref{h1}.

If $n\!\geqslant\!  3$, then ${\rm Aut}(F_n)$ contains $B_n$ (see \cite[Chap.\,3, 3.7]{MKS66}) and is nonlinear (see \cite{FP92}). This gives \eqref{h2} and \eqref{h4}.

\eqref{h1} implies \eqref{h3}.
\end{proof}

\begin{proof}[Proof of Proposition  {\rm\ref{prpr}}]
If $G$ is affine, then the rationality of $X=G^n$
follows from the rationality of
$\underline{G}$ (see \cite[Cor. 14.14]{Bo91}).

Let $G$ be nonaffine. Arguing by contradiction, suppose that $X=G^n$
is unirational. Let us use the notation of the proof of Theorem \ref{noquon}.
The variety ${{G/G_{\rm aff}}}$ is unirational
in view of the surjectivity of the composition of the following morphisms
\begin{equation*}
X=G^n\xrightarrow{\alpha} G\xrightarrow{\beta} G/G_{\rm aff},
\end{equation*}
where $\alpha$ is a projection onto some
factor,
and $\beta$ is the canonical pro\-jection.
By the condition, $G_{\rm aff}\neq G$, so that $G/G_{\rm aff}$ is a nontrivial Abelian variety. Since such varieties
are nonunirational (see \cite[Chap.\,3, Sect. 6.2, 6.4]{Sh07}),
we get a contradiction.
\end{proof}

\subsection{Proof of Theorem \ref{stratio1}
}\label{p3}

We use in the proof of  Theorem \ref{stratio1} the following known statement (see, e.g., \cite[Thm.\,1]{Po13}).

\begin{lemma}\label{strat}
If the field of invariant rational functions of some faith\-ful linear action of a finite group on a finite-dimensional vector space over $k$ is stably rational over $k$, then the same property holds for any other such action of this group.
\end{lemma}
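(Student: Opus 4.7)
The plan is to invoke the \emph{no-name lemma} (Bogomolov--Katsylo): whenever a finite group $K$ acts faithfully and linearly on a $k$-vector space $V$, and $W$ is any other finite-dimensional $K$-module, one has a purely transcendental extension
$$k(V \oplus W)^K \;\cong\; k(V)^K(y_1, \ldots, y_{\dim W}).$$
Geometrically, this records the fact that $K$ acts freely on a dense open subset of $V$, so that the equivariant projection $V \oplus W \to V$ descends to a Zariski-locally trivial vector bundle of rank $\dim W$ over the geometric quotient of that open set; passing to function fields yields the displayed isomorphism.

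Given two faithful $K$-modules $V_1, V_2$, set $L_i := k(V_i)^K$. Applying the no-name lemma with $(V,W) = (V_1, V_2)$ and again with the roles reversed produces a chain of $k$-isomorphisms
$$L_1(y_1, \ldots, y_{\dim V_2}) \;\cong\; k(V_1 \oplus V_2)^K \;\cong\; L_2(z_1, \ldots, z_{\dim V_1}).$$
Thus $L_1$ and $L_2$ become isomorphic after adjoining suitable sets of independent transcendentals, i.e.\ they are stably birationally equivalent over $k$.

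Suppose now $L_1$ is stably rational, so that $L_1(t_1, \ldots, t_m) = k(u_1, \ldots, u_N)$ for some $m, N$. Adjoining $t_1, \ldots, t_m$ to both ends of the chain above gives
$$L_2(z_1, \ldots, z_{\dim V_1}, t_1, \ldots, t_m) \;\cong\; L_1(t_1, \ldots, t_m)(y_1, \ldots, y_{\dim V_2}) \;=\; k(u_1, \ldots, u_N, y_1, \ldots, y_{\dim V_2}),$$
which is purely transcendental over $k$; hence $L_2$ is stably rational, as required. The only real obstacle is the no-name lemma itself, but it is classical and is precisely the content of the reference \cite{Po13} already invoked by the author. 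Once the lemma is granted, the deduction above is a formal manipulation of field extensions.
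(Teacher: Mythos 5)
Your argument is correct: the double application of the no-name lemma (legitimate here because a faithful linear action of a finite group is automatically generically free, so the projection $V_1\oplus V_2\to V_1$ descends to a generically trivial vector bundle on the quotient) shows that $k(V_1)^K$ and $k(V_2)^K$ are stably isomorphic over $k$, and stable rationality is inherited along stable isomorphism exactly as you conclude. The paper gives no proof of this lemma, simply citing \cite[Thm.\,1]{Po13}; your write-up is precisely the standard argument behind that cited result, so it matches the intended route and needs no correction.
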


\begin{proof}[Proof of Theorem {\rm \ref{stratio1}}]
Consider one of the  pairs $(K, \iota)$ found in \cite{Sa84}, where $K$ is a finite group, and
\begin{equation*}
\iota\colon K\hookrightarrow {\rm GL}(V),
\end{equation*}
is a group embedding, where $V$ is a finite-dimensional vector space over $k$,
for which the field of $\iota(K)$-invariant rational functions on $V$ is not stably rational
over $k$.

In view of Lemma \ref{strat}, replacing $V$ and $\iota$ if necessary, we can
(and shall) assume that
\begin{equation}\label{ncent}
\iota(K)\cap {\mathscr C}\big({\rm GL}(V)\big)=\{{\rm id}_V\}.
\end{equation}
Indeed, let
$L$ be a one-dimensional vector space over $k$. Since we have
${\mathscr C}\big({\rm GL}(V\oplus L)\big)=\{c\cdot {\rm id}_{V\oplus L}\mid c\in k, c\neq 0\},$ the group embedding
\begin{equation*}
\iota'\colon K\hookrightarrow {\rm GL}(V\oplus L),\quad f\mapsto \iota(f)\oplus{\rm id}_L,
\end{equation*}
has the property $\iota'(K)\cap {\mathscr C}\big({\rm GL}(V\oplus L)\big)=\{{\rm id}_{V\oplus L}\} $.

It follows from \eqref{ncent} that the diagonal linear action of
$\iota(K)$ on the vector space ${\rm End}(V)^{\oplus n}$ by conjugation is faithful.
Therefore, in view of Lemma \ref{strat}, the field of
$\iota(K)$-invariant rational functions on
${\rm End}(V)^{\oplus n}$
is not stably rational over $k$.
But ${\rm GL}(V)^{n}$ is a
$\iota(K)$-invariant open subset of ${\rm End}(V)^{\oplus n}$. It is $\iota(K)$-equivariantly isomorphic to the algebraic variety
${\rm Hom}(F_n, {\rm GL}(V))$. Hence the field of $R$-invariant rational func\-tions on $X$ is not stably rational over $k$. But this field is isomorphic to the field of rational functions on $X/\!\!/R$, because, by Proposition \ref{existe}, the categorical factor \eqref{Lunan} is geometric.
Hence the variety $X/\!\!/R$ is not
stably rational. It is affine due to the affinness of ${\rm GL}(V)$
(see Proposition \ref{existe}).
Finally, by Theorem \ref{fg1}, from the nonsolvability of ${\rm GL}(V)$ with $n\geqslant 2$
it follows that the action of ${\rm Aut}(F_n)$ on
$X/\!\!/R$ is faithful.
\end{proof}

\begin{remark}\label{stra}
Found in \cite{Sa84}, the first examples of groups,
which can be taken as $K$ in Theorem \ref{stratio1}, have order $p^9$. At present, all groups of order $p^5$ with the specified property have been found
(See details and references in \cite[p.\,414, Rem.]{Po13}). For example, for $p\geqslant 5$, one of them is the group $K=\langle g_1, g_2, g_3, g_4, g_5\rangle$ of order $p^5$ given by the following con\-di\-tions
(in which $[a,b]:=a^{-1}b^{-1}ab$):
\begin{gather*}
{\mathscr C}(F)\!=\!\langle g_5\rangle,\;
g_i^p\!=\!e\;\mbox{for each $i$},\\
[g_1, g_1]\!=\!g_3,\;[g_3, g_1]\!=\!g_4,\;[g_4, g_1]\!=\![g_3, g_2]\!=\!g_5,\;[g_4, g_2]\!=\![g_4, g_3]\!=\!e.
\end{gather*}
\end{remark}

\subsection{Proofs of Theorem \ref{Cr1nn} and Corollary \ref{boundsn}}\label{p4}

\begin{proof}[Proof of Theorem {\rm\ref{Cr1nn}}]
In view of the finiteness of $R$, it follows from Proposition \ref{existe} that \eqref{Lunan} is the geometric quotient, and the variety $X/\!\!/R$ is affine (and irreducible due to
the connectedness of $G$). In particular, the fibers of the
surjective morphism \eqref{Lunan} are $R$-orbits and therefore zero-dimensional.
This implies the claim about the dimension, since
$\dim (G)=3$ and
$X=G^n$.
It remains to prove the rationality.

In this case, ${\rm Aut}(G)={\rm Int}(G)$.
Consider the adjoint action of ${\rm Int}(G)$ on $\mathfrak g$ and the diagonal
actions of the group ${\rm Int}(G)$ on
$X=G^n$ and ${\mathfrak g}^{n}:=
{\mathfrak g}\oplus\cdots\oplus {\mathfrak g}$ ($n$ summands).
According to \cite[Thm.\,1.28]{LPR06}, ${\rm SL}_2$ and ${\rm PSL}_2$ are the Cayley groups. Therefore, there exists an ${\rm Int}(G)$-equivariant (hence,
$R$-equivariant) birational mapping
\begin{equation}\label{Cayley}
X\dashrightarrow {\mathfrak g}^{n}.
\end{equation}
Consequently, the fields of $R$-invariant rational functions on
$X$ and $\mathfrak g^n$ are isomorphic. Hence the geometric quotients $X/\!\!/R$
and ${\mathfrak g}^{n}/\!\!/R$ are birationally isomorphic. But the linearity of the action of $R$ on ${\mathfrak g}^{n}$ and the decomposition ${\mathfrak g}^{n}={\mathfrak g}\oplus{\mathfrak g}^{n-1} $ with the $R$-invariant summads imply, in view of the
No-name lemma (see \cite[Lem.\,1]{Po13}, \cite[Thm.\,2.13]{PV94}),
that ${\mathfrak g}^{n}/\!\!/R$ is birationally isomorphic to
$\big({\mathfrak g}/\!\!/R\big)\times {\mathbb A}^{(n-1)\dim(\mathfrak{g})}$.
Since $\dim(\mathfrak{g})=3$ implies
the rationality of
${\mathfrak g}/\!\!/R$
(see \cite[Thm.\,2]{Mi71}), this shows
that ${\mathfrak g}^{n}/\!\!/R$, and therefore, also $X/\!\!/R$, is rational.
\end{proof}

\begin{proof}[Proof of Corollary {\rm\ref{boundsn}}]
This claim follows from Definition \ref{Defff},
Theo\-rem \ref{Cr1nn}, and Corollary \ref{cor0n}.
\end{proof}

\subsection{Proof of Theorem
\boldmath \ref{S=Gn}}\label{p5}

For $n\geqslant 2$, the group
${\rm Int}(F_n)$ is non\-tri\-vial, and hence, by Corollary \ref{intker}, the action
of ${\rm Aut}(F_n)$ on $X/\!\!/R$ is nonfaithful.

Now, let $n=1$, so that $X=G$, and ${\rm Aut}(F_n)$ is the group of order two. Let $\sigma\in {\rm Aut}(F_n)$, $\sigma\neq e$. Then $\sigma(f_1)=f_1^{-1}$, so $\sigma_X(g)=g^{-1}$ for any $g\in X$. Each fiber of the morphism
\eqref{Lunan} contains a single orbit consisting of semisimple elements, and it is the only closed orbit in this fiber (see \cite{St65}).
Since $R={\rm Int}(G)$, from here and Lemma\;\ref{categ}
the equivalence of the following properties follows:
\begin{enumerate}[\hskip 2.0mm\rm(i)]
\item $\sigma$ lies in the kernel of the  action of ${\rm Aut}(F_n)$ on $X/\!\!/R$;
\item $g$ and $g^{-1}$ are conjugate for each semisimple element $g\in G$.
\end{enumerate}

Since the intersection of any semisimple
conjugacy class
with a fixed maximal torus $T$ of $G$ is nonempty (see \cite[Thm. 11.10]{Bo91}) and
is the orbit of the normalizer of this torus (see \cite[6.1]{St65}),
property (ii) is equivalent to the fact that the Weyl group $W$
of the group $G$ considered as a subgroup of the group ${\rm GL}(\mathfrak{t})$,
contains $-1$. This, in turn, is equivalent to the fact that $-1$ is contained in the Weil group of every nontrivial connected simple normal subgroup of the group $G$.
Let $C$ be a Weyl chamber in $\mathfrak{t}$.
Since $-C$ is also a Weyl chamber, the simple transitivity of the action of  $W$ on the set of all Weyl chambers implies the existence of a unique element $w_0\in W$ such that $w_0(C)=-C$ . In view of $(-1)(C)=-C$, this means that the inclusion of $-1\in W$ is equivalent to the equality
 $w_0=-1$.
In \cite[Table. I--IX]{Bo68}, the explicit description of the element $w_0$ is given for every connected simple algebraic group. It follows from it that the equality $w_0= -1$ for such a group is equivalent to the fact that
the type of this group is not contained in list \eqref{type}.
This completes the proof.
\quad $\square$

\subsection{Final remarks}\

(a) Corollary \ref{boundsn} concerns, in particular, the subgroups of the Cre\-mona groups. Taking this opportunity, we will supplement it here with a remark on S. Cantat's question
about these subgroups.

In \cite{Co17}, are given examples of finitely generated (and even finitely presented) groups nonembeddable into any Cremona group, which ans\-wers S. Cantat's question about the existence of such groups (see also \cite{Ca131} ). These examples are based on the fact that the word problem (\cite[Thm.\,1.2]{Co13}) is solvable in every finitely generated sub\-group of any Cremona group. Let us indicate another way to answer this question
(and even in a stronger form, with the addition of the group simplicity condition).

Namely, we recall \cite[Def.\,1]{Po14} that a group $H$ is called {\it Jordan} if there exists a finite set $\mathcal F$ of finite groups such that every finite subgroup of $H $ is an extension of an Abelian group by a group taken from $\mathcal F$.
According to \cite[p.\,188, Exmp.\,6]{Po14}, the R.\;Thompson group $V$ is an example of a non-Jordan finitely presented group. Since any Cremona group is Jordan (see\,\cite[Cor.\,1.5]{Bi16}, \cite{PS16}),
the group $V$ is nonembeddable into it. Furthermore,
in addition to this property, $V$ is simple, and therefore, every homomorphism of  $V$ into any Cremona group is trivial (unlike \cite{Co13}, this proves
\cite[Cor.\,1.4]{Co13} without using the obtained in \cite{Mi81} amplification
of the Boone--Novikov const\-ruction).

We note that, after \cite{Co13},  many unrelated to the word problem examples of finitely generated (and even finitely presented) groups nonembeddable into any Cremona group were obtained in \cite{CX18}. However,
basing on the currently available (July 2022) information,
it is impossible to deduce from \cite[Thms.\,C and 7.15]{CX18} that
$V$ is nonembeddable into any Cremona group.
Indeed, the group $V$ does not have Kazhdan's property (T) (see\;\cite{BJ19}),
and whether it has property $(\tau^\infty)$ (see\,\cite[Sect.\,7.1.3]{CX18})
is unknown \cite{Co22}.

\vskip 2mm

(b) Among the irreducible affine varieties $X/\!\!/R$ whose automorphism group contains ${\rm Aut}(F_n)$, there are open subsets of affine spaces. Indeed, by Theorem \ref{fg1} for $n\geqslant 2$, such an example is $X/\!\!/R$ with $G={\rm GL}_d$, $d\geqslant 2$, and trivial $R$. The following construction generalizes this example.

Consider a finite dimensional associative $k$-algebra $A$ with identity. The group $A^*$ of its invertible elements is a connected affine algebraic group whose underlying variety is open in $A$. If
$A^*$ is nonsolvable, then in view of  Theorem \ref{fg1},
it can be taken instead of ${\rm GL}_d$ in the example from the previous paragraph.

\vskip 1mm

(c)
In \cite[pp.\,272]{CX18}, is obtained the lower bound
\begin{equation*}\label{etemat}
   n-2\leqslant {\rm Var}_{\mathbb C}\big({\rm Out}(F_n)\big).
\end{equation*}
The following theorem yields, among other things,
an upper bound.

\begin{theorem}\label{Out}
We retain the notation of Theorem {\rm \ref{fg1}}.\;Let ${\rm char}(k) =0$, $n\geqslant 3$, $G={\rm SL}_2$ or ${\rm PSL}_2$, and $R={\rm Int}(G)$. Then $X/\!\!/R$ is an ir\-re\-duc\-ible rational affine $(3n-3)$-dimensional manifold,
whose automorphism group contains ${\rm Out}(F_n)$.
\end{theorem}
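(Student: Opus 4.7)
The plan is to prove the four claims about $X/\!\!/R$ by adapting the Cayley-map argument of Theorem~\ref{Cr1nn} to the reductive group $R={\rm Int}(G)$, and then invoking the classical Horowitz--Magnus rigidity theorem to obtain faithfulness of the induced ${\rm Out}(F_n)$-action.

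First I would dispose of the geometric properties. Since $G$ is connected, $X=G^n$ is irreducible; since $G$ is affine and $R\cong {\rm PGL}_2$ is reductive, Proposition~\ref{existe} yields that $X/\!\!/R$ is an irreducible affine variety. For the dimension, I would check that for $n\geqslant 2$ the simultaneous conjugation action of $R$ on $G^n$ has trivial generic stabilizer: a generic pair $(g_1,g_2)\in G^2$ generates a Zariski-dense subgroup of $G$, so its common centralizer is contained in ${\mathscr C}(G)$, which maps to the identity in $R={\rm Int}(G)$. Consequently $\dim(X/\!\!/R)=3n-3$, and over the $R$-principal open subset of $X/\!\!/R$ the quotient morphism is a principal $R$-bundle, which accounts for the ``manifold'' content of the statement.

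For rationality, I would argue as in Theorem~\ref{Cr1nn}: both ${\rm SL}_2$ and ${\rm PSL}_2$ are Cayley groups by \cite[Thm.\,1.28]{LPR06}, so there is an $R$-equivariant birational map $X\dashrightarrow \mathfrak{g}^{n}$. The decomposition $\mathfrak{g}^n=\mathfrak{g}^2\oplus \mathfrak{g}^{n-2}$ into $R$-invariant summands has $R$ acting generically freely on the first summand (the same centralizer argument applied to traceless matrices), so the No-name lemma gives
\begin{equation*}
\mathfrak{g}^n/\!\!/R \;\sim_{\rm bir}\; (\mathfrak{g}^2/\!\!/R)\times {\mathbb A}^{3(n-2)}.
\end{equation*}
By classical invariant theory, $\mathfrak{g}^2/\!\!/R\cong {\mathbb A}^3$ (with coordinates the three pairwise traces of a pair of traceless $2\times 2$ matrices), so $\mathfrak{g}^n/\!\!/R$ and hence $X/\!\!/R$ is rational.

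Finally, to embed ${\rm Out}(F_n)$ into ${\rm Aut}(X/\!\!/R)$, I would combine two facts. Corollary~\ref{intker} shows that ${\rm Inn}(F_n)$ lies in the kernel of the ${\rm Aut}(F_n)$-action on $X/\!\!/R$, so this action descends to a homomorphism ${\rm Out}(F_n)\to {\rm Aut}(X/\!\!/R)$. Injectivity of the descended homomorphism is the content of the classical Horowitz--Magnus theorem \cite{Ho72}, \cite{Ho75}, \cite{Ma80}: for $n\geqslant 2$ the only automorphisms of $F_n$ that fix every closed $R$-orbit on ${\rm Hom}(F_n,{\rm SL}_2)$ (equivalently, every point of the ${\rm SL}_2$-character variety, by Lemma~\ref{categ}) are inner; the ${\rm PSL}_2$ case follows by a parallel trace-identity analysis. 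The main obstacle is this last step, as Horowitz--Magnus rests on a delicate combinatorial study of trace polynomials of words in $F_n$; by contrast, irreducibility, affineness, dimension and rationality are essentially formal consequences of the machinery already deployed in the proof of Theorem~\ref{Cr1nn}.
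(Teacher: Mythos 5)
Your overall strategy coincides with the paper's (Corollary \ref{intker} to descend the action to ${\rm Out}(F_n)$, Horowitz for the kernel, the Cayley map $X\dashrightarrow\mathfrak g^n$ for rationality), but there is one genuine gap in the geometric part. The inference ``trivial generic stabilizer, hence $\dim(X/\!\!/R)=3n-3$'' is not valid for a categorical quotient by a reductive group, and the same issue silently enters your rationality step, which needs $k(X/\!\!/R)=k(X)^R$ and $k(\mathfrak g^n/\!\!/R)=k(\mathfrak g^n)^R$: for ${\mathbb G}_m$ acting on ${\mathbb A}^2$ by homotheties the generic stabilizer is trivial and the orbits are $1$-dimensional, yet ${\mathbb A}^2/\!\!/{\mathbb G}_m$ is a point and the fraction field of the invariant ring is strictly smaller than the field of invariant rational functions. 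What is really needed is that the generic $R$-orbits in $G^n$ and in $\mathfrak g^n$ are closed (stability), so that over a suitable open subset the quotient is geometric; only then do the dimension count and the identification of the function field of the categorical quotient with the field of $R$-invariant rational functions go through. This is precisely what the paper imports from Richardson \cite[Lem.\,3.3, Thm.\,4.1]{Ri88} together with \cite[Thm.\,4.4]{PV94}. Your remark that a generic pair generates a Zariski-dense subgroup is the right starting point (irreducible tuples have closed orbits), but closedness must be stated and proved or cited; as written the step is missing, and it is needed twice.

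Apart from that, the differences are benign. For the kernel you quote Horowitz for $n\geqslant 2$; his theorem gives kernel $={\rm Int}(F_n)$ only for $n\geqslant 3$ (for $n=2$ the automorphism $f_i\mapsto f_i^{-1}$ acts trivially on the ${\rm SL}_2$-character variety), which is harmless here since the theorem assumes $n\geqslant 3$; your caveat about the ${\rm PSL}_2$ case puts you on the same footing as the paper, which also simply cites \cite{Ho75}. In the rationality endgame you diverge from the paper: it applies Katsylo's theorem on the rationality of fields of invariants of ${\rm SL}_2$-modules (\cite[Thm.\,2.12]{PV94}) directly to $\mathfrak g^n$, whereas you use the No-name lemma with the splitting $\mathfrak g^2\oplus\mathfrak g^{n-2}$ and the classical fact $k[\mathfrak g^2]^R=k[{\rm tr}(x^2),{\rm tr}(xy),{\rm tr}(y^2)]\cong k[{\mathbb A}^3]$; this route is correct and more elementary, provided the generic freeness on $\mathfrak g^2$ and the stability point above are supplied.
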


\begin{proof}
The affineness of $X/\!\!/R$ follows from the reductivity of $R$.
Ac\-cord\-ing to \cite{Ho75}, for $n\geqslant 3$, the
kernel of the action of
${\rm Aut}(F_n)$ on $X/\!\!/R$ is
${\rm Int}(F_n)$, so ${\rm Out}(F_n)$ is embedded in
${\rm Aut}(X/\!\!/R)$. From \cite[Lem.\,3.3, Thm.\,4.1]{Ri88} and $\dim(G)=3$ we infer the nonemptiness of the open subsets of
$X=G^n$ and $\mathfrak g^n$ comprised by points whose  $R$-orbits are three-dimensional and closed. This and the existence of the geometric quotients for the suitable open subsets of
 $X=G^n$ and $\mathfrak g^n$ (see \cite[Thm.\,4.4]{PV94}) imply
that the dimensions of the varieties $G^n/\!\!/R$ and $\mathfrak g^n/\!\!/R$ are equal to $3n-3$, and their fields of rational functions
coincide under the natural embedding
with the fields of $R$-invariant rational functions on $X$ and $\mathfrak g^n$ respectively.
As in the proof of  Theorem \ref{Cr1nn}, there is an $R$-equivariant
birational mapping \eqref{Cayley}, and therefore, the specified fields of $R$-invariant rational functions are isomorphic. Hence the algebraic varieties $G^n/\!\!/R$ and $\mathfrak g^n/\!\!/R$ are birationally isomorphic.
But, according to P.\,Katsylo, the field of inva\-riant rational functions on any finite dimensional algebraic ${\rm SL}(2)$-mo\-dule is purely transcendental over $k$ (see \cite[Thm.\,2.12]{PV94}). Hence $G^n/\!\!/R=X/\!\!/R$ is a rational algebraic variety.
\end{proof}

\begin{corollary} If ${\rm char}(k)=0$ and $n\geqslant 3$, then
\begin{equation}\label{estim}
\mbox{${\rm Var}_k\big({\rm Out}(F_n)\big)\leqslant 3n-3$\quad and\quad
${\rm Crem}_k\big({\rm Out}(F_n)\big)\leqslant 3n-3$.}
\end{equation}
\end{corollary}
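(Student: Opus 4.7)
The plan is to deduce both inequalities as immediate consequences of Theorem \ref{Out} combined with Definition \ref{Defff}. Theorem \ref{Out} produces a specific witness variety $Z := X/\!\!/R$, with $X = {\rm Hom}(F_n, G)$, $G = {\rm SL}_2$ (or ${\rm PSL}_2$), and $R = {\rm Int}(G)$, which simultaneously enjoys three features needed for the argument: it is irreducible and affine of dimension $3n-3$, it is rational, and its biregular automorphism group ${\rm Aut}(Z)$ contains ${\rm Out}(F_n)$. The same $Z$ will serve as the witness for both bounds in \eqref{estim}.

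For the first estimate, I would simply observe that since $Z$ is an irreducible algebraic variety defined over $k$ of dimension $3n-3$ with ${\rm Out}(F_n) \hookrightarrow {\rm Aut}(Z)$, Definition \ref{Defff} gives at once ${\rm Var}_k({\rm Out}(F_n)) \leqslant \dim(Z) = 3n-3$.

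For the second estimate, the rationality of $Z$ is the essential ingredient. Any biregular automorphism is in particular birational, so there is a natural inclusion ${\rm Aut}(Z) \hookrightarrow {\rm Bir}(Z)$. Since $Z$ is rational of dimension $3n-3$, the group ${\rm Bir}(Z)$ is isomorphic to the Cremona group of rank $3n-3$. Composing with the embedding ${\rm Out}(F_n) \hookrightarrow {\rm Aut}(Z)$ of Theorem \ref{Out} yields an embedding of ${\rm Out}(F_n)$ into this Cremona group, which by Definition \ref{Defff} gives ${\rm Crem}_k({\rm Out}(F_n)) \leqslant 3n-3$.

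There is essentially no obstacle in this step; all the substantive work was carried out in the proof of Theorem \ref{Out} (namely, Hor\-owitz's computation of the kernel of the ${\rm Aut}(F_n)$-action for $n \geqslant 3$, the Cayley property of ${\rm SL}_2$ and ${\rm PSL}_2$ ensuring the ${\rm Int}(G)$-equivariant birational map to $\mathfrak{g}^n$, and Katsylo's rationality theorem for ${\rm SL}_2$-modules). The corollary is purely a restatement of these conclusions in the language of Definition \ref{Defff}.
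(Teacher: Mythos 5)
Your proposal is correct and follows exactly the route the paper intends: the corollary is an immediate consequence of Theorem \ref{Out} together with Definition \ref{Defff}, using the witness $Z=X/\!\!/R$ (irreducible, affine, rational, of dimension $3n-3$, with ${\rm Out}(F_n)\hookrightarrow{\rm Aut}(Z)$) and the standard identification of ${\rm Bir}(Z)$ with the Cremona group of rank $3n-3$ via rationality. Nothing is missing; this matches the paper's (implicit) argument.
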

As noted in \cite[pp.\,272]{CX18} (with reference to \cite{MS75}), over $\mathbb C$, the minimal dimension in which ${\rm Out}(F_n)$
is the group of birational self-maps,
does not exceed $6n$.
The right-hand side inequality in \eqref{estim} is the twice stronger upper bound.

\end{document}